\newtheorem{thm}{Theorem}
\newtheorem{prop}[thm]{Proposition}
\newtheorem{lem}[thm]{Lemma}
\newtheorem{rem}[thm]{Remark}
\newtheorem{conjecture}[thm]{Conjecture}
\newproof{proof}{Proof}
\def\R{\mathbb R}
\def\C{\mathbb C}
\def\Q{\mathbb Q}
\def\Z{\mathbb Z}
\def\F{\mathbb F}
\def\Qbar{\overline{\mathbb {Q}_p}}
\DeclareMathOperator{\gal}{Gal}
\DeclareMathOperator{\pr}{pr}
\DeclareMathOperator{\GL}{GL}
\begin{document}

\title{Valuations of $p$-adic regulators of cyclic cubic fields}

\author[tukl]{Tommy Hofmann\corref{cor1}}
\address[tukl]{Fachbereich Mathematik, Technische Universit\"at Kaiserslautern, Postfach 3049, 67653 Kaiserslautern, Germany}
\ead{thofmann@mathematik.uni-kl.de}

\cortext[cor1]{Corresponding author}

\author[usyd]{Yinan Zhang}
\address[usyd]{School of Mathematics and Statistics, The University of Sydney}
\ead{y.zhang@sydney.edu.au}

\begin{abstract}
We compute the $p$-adic regulator of cyclic cubic extensions of $\Q$ with discriminant up to $10^{16}$ for $3<p<100$, and observe the distribution of the $p$-adic valuation of the regulators. We find that for almost all primes, the observation matches the model that the entries in the regulator matrix are random elements with respect to the obvious restrictions.
Based on this random matrix model, a conjecture on the distribution of the valuations of $p$-adic regulators of cyclic cubic fields is stated.
\end{abstract}


\begin{keyword}
$p$-adic regulator \sep distribution of $p$-adic regulators \sep cubic fields
\MSC 11Y40 \sep 11K41 \sep 11R16 \sep 11R27
\end{keyword}

\maketitle

\section{Introduction}

The class group of a number field $K$ is an important invariant of the field, providing information on the multiplicative structure of the number field. Another invariant of the field, the regulator, provides information on the unit group structure, and is intimately linked to the the class group by the class number formula. Various algorithms for class group computation have emerged over the years, culminating in Buchmann's subexponential algorithm which can be used compute class groups of arbitrary number fields \cite{Buc}. Further improvements by Cohen, Diaz y Diaz and Olivier \cite{CDO} allowed computation of both the class group and regulator in the same algorithm. Despite this and further improvements to the algorithm in the last 25 years \cite{BDF,BF}, it is still impractical to compute the class group and regulator of number fields with large discriminants. The ability to compute class groups and regulators efficiently for arbitrary number fields has, and remains, a key focus in computational number theory.

With the development of these algorithms and the availability of computational resources, one has seen a rise of what is nowadays called arithmetic statistics. Instead of considering invariants of single arithmetic objects, one investigates the statistics of invariants of families of arithmetic objects. This has led to various deep conjectures about invariants of number fields, most notably Malle's conjecture on the distribution of Galois groups \cite{Malle2004}, the Cohen--Lenstra--Martinet conjecture on the distribution of class groups \cite{Cohen1984, Cohen1987} and Malle's refinement thereof \cite{Malle2010}. We extend this list of conjectures by providing a heuristic on the distribution of $p$-adic regulators of cyclic cubic fields.

The $p$-adic regulator $R_p(K)$ of a number field $K$ was introduced by Leopoldt in his investigation of $p$-adic $L$-functions, and he conjectured that it is non vanishing. Unlike its classical counterpart, the $p$-adic regulator is only well defined if the number field is totally real abelian or CM. As a result, very little information is known about the $p$-adic regulator, in contrast to the classical regulator. Previous research on computing $p$-adic regulators has been predominantly focused on numerical verification of Leopoldt's conjecture rather than computing their exact value.

Limited work on the valuation of $p$-adic regulator was carried out by Miki \cite{Hi}, who attempted to provide an upper bound on $v_p(R_p(K))$. This did not turn out to be useful in practice, as the formula contained terms whose values are not explicit. A simple lower bound on the valuation was also briefly mentioned by Hakkarainen in his PhD thesis \cite{H}, but overall there has been little focus on this area, due to mainly difficulties with computing $R_p(K)$. In particular, Panayi, who was one of the first to compute $R_p(K)$ explicitly in his PhD thesis \cite{P}, noted that there were significant practical difficulties with computing in $p$-adic fields.

Recent development in a $p$-adic class number algorithm by Fieker and Zhang \cite{FZ} for totally real abelian fields made it possible to compute the $p$-adic regulator for these fields in a relatively efficient manner. This allowed us to compute the $p$-adic regulator for a large number of cyclic cubic extensions of $\Q$, and the experimental data allowed us to conjecture and provide heuristics on the distribution of the values of $v_p(R_p(K))$.

Let $\mathcal K$ be the set of all cyclic cubic extensions of $\Q$ inside a fixed algebraic closure of $\Q$. Note that such extensions are necessarily totally real. For a prime $p$ let $\mathcal K_p^{\mathrm{un}}$ and $\mathcal K_p^{\mathrm{ram}}$ respectively denote the set of all fields in $\mathcal K$ which are unramified and ramified at $p$ respectively.
Note that $\mathcal K_p^{\mathrm{ram}} = \emptyset$ and $\mathcal K = \mathcal K_p^{\mathrm{un}}$ in case $p \equiv 2 \bmod 3$.
For $D > 0$ we set $\mathcal K(D) = \{ K \in \mathcal K \mid \lvert d(K) \rvert \leq D \}$, where $d(K)$ is the discriminant of $K$.
Moreover we set $\mathcal K_p^{\mathrm{un}}(D) = \mathcal K_p^{\mathrm{un}} \cap \mathcal K(D)$ and $\mathcal K_p^{\mathrm{ram}}(D) = \mathcal K_p^{\mathrm{ram}} \cap \mathcal K(D)$.
Based on heuristics and numerical data, we make the following conjecture:

\begin{conjecture}\label{conj}
  For primes $p > 3$ the following hold:
  \begin{enumerate}[(i)]
  \item
  If $p \equiv 2 \bmod 3$, then $v_p(R_p(K)) \in 2 \Z$ for all $K \in \mathcal K$ and for $i \geq 0$ we have
  \[ \lim_{D \to \infty} \frac{\#\{ K \in \mathcal K(D) \mid v_p(R_p(K)) = 2i \}}{\# \mathcal K(D)} = \frac 1 {p^{2i - 2}} \left( 1- \frac  1 {p^2} \right). \]
  \item
  If $p \equiv 1 \bmod 3$, then for $i \geq 0$ we have
  \[ \lim_{D \to \infty} \frac{\#\{ K \in \mathcal K_p^{\mathrm{un}}(D) \mid v_p(R_p(K)) = i \}}{\#\mathcal K_p^{\mathrm{un}}(D)} = \frac{i-1}{p^{i-2}}\left( 1 - \frac 1 p \right)^2. \]
  and
  \[ \lim_{D \to \infty} \frac{\#\{ K \in \mathcal K_p^{\mathrm{ram}}(D) \mid v_p(R_p(K)) = i\}}{\# \mathcal K_p^{\mathrm{ram}}(D)} =  \frac{i}{p^{i-1}}\left( 1 - \frac 1 p \right)^2.  \]
  \end{enumerate}
\end{conjecture}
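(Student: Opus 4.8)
This is stated as a conjecture, so a ``proof'' here must separate what can be established unconditionally from the probabilistic heuristic that supplies the actual densities. The plan has three components: (1) an exact per-field formula for $R_p(K)$; (2) a completion-at-$p$ analysis that, for each splitting type of $p$, pins down which lattice the relevant $p$-adic logarithms live in (and already gives the parity statement of (i)); and (3) a ``random lattice'' heuristic after which the densities drop out of an elementary count. For (1): since $\Cl(\Q(\zeta_3))$ is trivial, $\mathcal O_K^\times/\{\pm1\}$ is free of rank one over $\Z[\zeta_3]$ for \emph{every} $K\in\mathcal K$, so there is a Minkowski unit $\eta$ with $\{\eta,\sigma\eta\}$ a fundamental system, $\sigma$ a fixed generator of $\gal(K/\Q)$. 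Fixing an embedding of $\overline{\Q}$ into a fixed algebraic closure of $\Q_p$ and writing $\ell_k=\log_p(\sigma^k\eta)$ for $k\in\Z/3$, the standard expression of the $p$-adic regulator through characters gives $R_p(K)=\pm\, g_1 g_2$, where $g_j=\ell_0+\omega^{\,j}\ell_1+\omega^{\,2j}\ell_2$ for the two primitive cube roots $\omega^{\,j}$ of unity ($j=1,2$), and one uses $\ell_0+\ell_1+\ell_2=\log_p N_{K/\Q}(\eta)=0$.

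For (2) I distinguish three cases according to the excerpt's trichotomy. If $p\equiv2\bmod3$, then $\Q_p(\omega)$ is the unramified quadratic extension of $\Q_p$, the automorphism interchanging $g_1$ and $g_2$ fixes $K\otimes\Q_p$ and is valuation-preserving, and $R_p(K)=g_1g_2$ is its own norm; since the relevant local extension is unramified, $v(g_1)\in\Z$, whence $v_p(R_p(K))=v(g_1)+v(g_2)=2v(g_1)\in2\Z$ for every $K$ --- this is the first assertion of (i), and it is unconditional. Moreover $g_1$ lies in the $\omega$-isotypic part of the completion, and as $p$ is unramified in $K$ and $\zeta_p\notin\Q_p$, $\log_p$ carries the local units into $p$ times the ring of integers; so $g_1\in p\,\Z_{p^2}$ up to a fixed unit, i.e.\ $v_p(R_p(K))=2+2v(s)$ for a single parameter $s\in\Z_{p^2}$. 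If $p\equiv1\bmod3$ and $p$ is unramified in $K$, then $\omega\in\Z_p$, the two idempotents are defined over $\Q_p$, $g_1$ and $g_2$ lie in the two distinct $\Q_p$-eigenlines (each with an integral generator of valuation $0$), and again $\log_p$ lands in $p\cdot\mathcal O$; so $g_j=p\cdot(\text{unit})\cdot c_j$ with $c_j\in\Z_p$ and $v_p(R_p(K))=v_p(c_1)+v_p(c_2)$. If $p\equiv1\bmod3$ and $p$ ramifies in $K$, then $K\otimes\Q_p$ is a totally ramified cyclic cubic $L$ with uniformizer $\pi$, $\pi^3$ equals $p$ up to a unit, the residue field is $\F_p$ so $\log_p(\mathcal O_L^\times)=\mathfrak m_L$, and $g_1\in\Q_p\pi^{a}$, $g_2\in\Q_p\pi^{3-a}$ for some $a\in\{1,2\}$ with $g_j=c_j\pi^{a_j}$, $c_j\in\Z_p$ possibly a unit; hence $R_p(K)=(\text{unit})\,c_1c_2\,p$ and $v_p(R_p(K))=1+v_p(c_1)+v_p(c_2)$.

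Component (3) is the heuristic. The model is that as $K$ ranges over $\mathcal K(D)$ (resp.\ $\mathcal K_p^{\mathrm{un}}(D)$, $\mathcal K_p^{\mathrm{ram}}(D)$) the class of the rank-two lattice $\log_p(\mathcal O_K^\times\otimes\Z_p)$ inside the trace-zero lattice at $p$ equidistributes for the natural $\GL$-invariant measure, independently of the splitting type of $p$; equivalently, the parameters $c_1,c_2$ above become independent and uniform --- in $p\Z_p$ in the unramified case $p\equiv1\bmod3$, in $\Z_p$ in the ramified case --- and the parameter $s$ for $p\equiv2\bmod3$ uniform in $\Z_{p^2}$ (this is the ``random entries subject to the obvious restrictions'' of the abstract; the obvious restrictions are exactly the trace-zero condition and the image of $\log_p$ identified in (2)). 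Granting this, the three densities are routine: for $s$ uniform in $\Z_{p^2}$, $\Pr[\,2+2v(s)=2i\,]=p^{-(2i-2)}(1-p^{-2})$, which is (i); for $c_1,c_2$ uniform in $p\Z_p$, $\Pr[\,v_p(c_1)+v_p(c_2)=i\,]=(i-1)p^{-(i-2)}(1-p^{-1})^2$, which is the unramified part of (ii); and for $c_1,c_2$ uniform in $\Z_p$, $\Pr[\,1+v_p(c_1)+v_p(c_2)=i\,]=i\,p^{-(i-1)}(1-p^{-1})^2$, which is the ramified part of (ii). One also needs that $\mathcal K_p^{\mathrm{un}}$ and $\mathcal K_p^{\mathrm{ram}}$ have the limiting densities dictated by the count of conductors, which is elementary.

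\textbf{Main obstacle.} Everything except the parity statement of (i) and the exact formula $R_p(K)=\pm g_1g_2$ with its case analysis rests on the equidistribution assumption in component (3), and that assumption is out of reach: it lies in the same circle of ideas as Leopoldt's conjecture --- one cannot even prove $R_p(K)\ne0$ in general, let alone control $v_p(R_p(K))$ over a family --- and there is no analogue here of the random-matrix-cokernel theorems that make Cohen--Lenstra heuristics provable in the function-field setting. A further difficulty is excluding systematic bias: up to the regulator, $R_p(K)$ is essentially a value or residue at $s=1$ of a $p$-adic $L$-function attached to $K$, so ``$v_p(R_p(K))$ large'' means ``a $p$-adic $L$-function has a near-zero'', and the statistics of such near-zeros in families is precisely what is unknown. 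Consequently, a realistic deliverable is the reduction of Conjecture~\ref{conj} to the clean equidistribution hypothesis of component~(3), together with the two unconditional facts: $v_p(R_p(K))\in2\Z$ for all $K$ when $p\equiv2\bmod3$, and the per-field factorization $R_p(K)=\pm g_1g_2$ with the precise local description of $g_1,g_2$ in each case.
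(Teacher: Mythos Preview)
Your heuristic derivation is correct and lands on exactly the same target as the paper, but you reach it by a somewhat different and in places cleaner route. The paper never writes down the resolvent factorisation $R_p(K)=\pm g_1g_2$; instead it packages the logarithms into explicit $3\times 3$ matrices $M_p^{\mathrm{split}}$, $M_p^{\mathrm{inert}}$, $M_p^{\mathrm{ram}}$, expands each determinant by hand, and shows that in every case the valuation is governed by the binary form $f=X^2+3Y^2$ (equivalently $X^2+XY+Y^2$). Your character decomposition is of course the diagonalisation of that same circulant determinant, and your product $g_1g_2$ is precisely the norm form the paper arrives at after its case-by-case expansion; so the two arguments are equivalent, with yours more conceptual and the paper's more explicit. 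Two further differences worth flagging: you invoke a global Minkowski unit via freeness of $U_K^\ast$ over $\Z[\zeta_3]$, whereas the paper only uses, for each $p$, a unit $\varepsilon_p$ whose $\Z[G]$-span has index prime to $p$ (citing Marko); your statement is stronger and your justification is fine. Second, you isolate the parity assertion in~(i) as unconditional via $v(g_1)=v(g_2)$ under the Frobenius of $\Q_p(\omega)/\Q_p$; the paper does not single this out, though it is implicit once one knows the determinant is $3$ times a value of the anisotropic form $X^2+3Y^2$ over $\Z_p$. One small caution: in the inert subcase of $p\equiv 2\bmod 3$ your sentence ``$g_1\in p\,\Z_{p^2}$ up to a fixed unit'' is hiding the passage to the $\omega$-eigenline inside $\Z_p^{(6)}$, which is a rank-one $\Z_{p^2}$-module but not literally $\Z_{p^2}$; this is exactly the inert-case subtlety the paper also cannot dispose of (their Conjecture~\ref{conj:inert}), so neither approach gains ground there. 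Your ``main obstacle'' paragraph matches the paper's: both reduce Conjecture~\ref{conj} to an equidistribution hypothesis (their Conjecture~\ref{conj1}) that remains out of reach.
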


\section{Definition and algorithm}

Let $K$ be a number field of degree $n$ with unit group $U_K$.
We first define the (classical) regulator $R(K)$ and its $p$-adic analogue $R_p(K)$, as the definition of $R_p(K)$ can be ambiguous.
The field $K$ has $r_1$ embeddings into $\R$, denoted as $\tau_1,\dots,\tau_{r_1}$, and $2r_2$ embeddings into $\C$, denoted as conjugate pairs $\tau_{{r_1}+1},\overline{\tau}_{{r_1}+1},\dotsc,\tau_{{r_1}+{r_2}}, \overline{\tau}_{{r_1}+{r_2}}$, with $r_1+2r_2=n$.
Let $u_1,\dotsc,u_{{r_1}+{r_2}-1}$ be a system of fundamental units of $U_K$, that is, these elements form a basis of the torsion-free quotient of $U_K$. Consider the submatrix formed by deleting one column of the matrix
\[(\delta_i \log \rvert \tau_i(u_j)\rvert)_{i,j} \in \operatorname{Mat}_{r_1 + r_2 - 1}(\R) \,,\]
where $\delta_i=1$ for $1\leq i \leq r_1$ and $2$ if $r_1+1\leq i \leq r_1+r_2$. As each row sums to zero, the determinant of such a submatrix is independent of the column deleted. The absolute value of this determinant, denoted by $R(K)$, is also independent of the choice of the fundamental units and is known as the regulator of the number field $K$.

\subsection{The $p$-adic regulator}
The $p$-adic regulator can be defined similarly, with changes to the embeddings. By fixing an embedding from $\C_p$ into $\C$, any embedding of $K$ into $\C_p$ can be considered as either real or complex, depending on the image of $K$ in the composite embedding into $\C$. For totally real or CM fields, whether an embedding from $K$ to $\C_p$ is real or complex is independent of the choice of embedding from $\C_p$ to $\C$, but ambiguities will arise in other number fields.
As cyclic cubic fields are totally real, the $p$-adic regulator is a well-defined object and does not depend on any choice.

We denote by $\sigma_1,\dots,\sigma_{r_1}$ the real embeddings and by $\sigma_{{r_1}+1},\overline{\sigma}_{{r_1}+1},\dots$, $\sigma_{{r_1}+{r_2}}$, $\overline{\sigma}_{{r_1}+{r_2}}$ the complex embeddings from $K$ to $\C_p$. Then, the $p$-adic regulator $R_p(K)$ is defined as the determinant of the submatrix obtained by removing one column of the matrix
\[(\delta_i \log_p\sigma_i(u_j))_{i,j} \in \operatorname{Mat}_{r_1 + r_2 - 1}(\C_p) \,,\]
where $\log_p$ is the $p$-adic logarithm, and $u_1,\dots,u_{{r_1}+{r_2}-1}$ and $\delta_i$ as defined previously.
As for the ordinary regulator, this value is up to units independent of the choice of the fundamental units and the column deleted.

Alternatively instead of deleting a column in the matrix, one can add a row of $1$'s to it. Again, due to each row summing to zero, the determinant is unaffected. This definition was introduced by Iwasawa \cite{I} and was subsequently implemented in the algorithm by Fieker and Zhang \cite{FZ}, part of which is used here with some modifications. This does have the disadvantage of calculating the determinant of a matrix one dimension higher than necessary, but it turned out that this is outweighed by leaving the structure of the original matrix intact.

It is important to note that we can replace the fundamental units with a set of independent units which generate a $p$-maximal subgroup of $U_K$.
More precisely, let $u_1',\dotsc,u_{r_1+r_2-1}'$ be multiplicatively independent units and assume that the index of $\langle u_1',\dotsc,u_{r+1+r_2-1}', \mu_K \rangle$ in $U_K$ is coprime to $p$ (here $\mu_K$ is the set of roots of unity of $K$).
Then one can show that the determinant of the submatrix obtained by removing one column of the matrix
\[ (\delta_i \log_p\sigma_i(u'_j))_{i,j} \in \operatorname{Mat}_{r_1 + r_2 - 1}(\C_p) \,,\]
is equal to the $p$-adic regulator modulo $p$-adic units.

By applying saturation techniques of Biasse and Fieker \cite{BF} on a tentative unit group, we can obtain a $p$-maximal subgroup of $U_K$. The units are represented as a power product of some small elements $\alpha_j$ and some large exponents $e_{i,j}\in\Z$ in the form of 
\[u_i = \prod_{j=1}^r \alpha_j^{e_{i,j}}\,,\]
instead of in terms of some fixed basis of the field. This is due to the fact that $u_i$ may require up to $O(\sqrt{|d(K)|}/n)$ digits, and the power product representation not only permits more efficient storage of the result, but also speeds up the actual computation of $\log_p\sigma_j(u_i)$. The different $p$-adic embeddings are computed using standard techniques for $p$-adic factorisation or root finding. Alternatively once we have one fixed $p$-adic embedding we can make use of $\Q$-automorphisms of the field to find the remaining embeddings.

\subsection{Tables of number fields}
In order to test our conjecture, we were in need of a large number of cyclic cubic extensions of $\Q$.  One possibility is to use the tables of cyclic cubic fields of discriminant $< 10^{16}$ compiled by Malle, which were used in \cite{Malle2008} and based on the algorithm described in \cite{Cohen1993}.
Instead, we took the opportunity to validate the data by computing the same table using a different method.
More precisely, we used the algorithms based on global class field theory as provided by Fieker in \cite{Fieker2001} (see also \cite{Cohen2000}).
Since the base field is just $\Q$, theory tells us that the cyclic cubic extensions $K|\Q$ are in bijection with pairs $(f, U)$, where $f \in \Z_{>0}$ is an integer and $U \subseteq (\Z/f\Z)^\times$ is a subgroup of index $3$, modulo an appropriate equivalence relation.
In fact $f$ can be taken to be the conductor of $K$, which is the smallest integer such that $K \subseteq \Q(\zeta_f)$. 
Hence in order to list all cyclic cubic fields of bounded conductor we enumerate all pairs $(f, U)$ with bounded $f$.
Since the discriminant $d_K$ of a cyclic cubic field $K$ of conductor $f$ satisfies $d_k = f^2$, in this way we can enumerate all cyclic cubic fields with bounded discriminant.
To obtain a defining equation of $K|\Q$ given the pair $(f, U)$ requires more work and involves the computing of discrete logarithms in $(\Z/f \Z)^\times$ (see the references for more details).
Since the conductor $f$ must satisfy $\# (\Z/f\Z)^\times \equiv 0 \bmod 3$ and the prime divisors of $f$ are exactly the ramified primes of $K|\Q$, this shows that if $p$ is a ramified prime in $K$, then $p \equiv 1 \bmod 3$.

Note that we have found the same number of cyclic cubic fields of discriminant less than $10^{16}$, namely $15\,852\,618$.

\section{$p$-adic regulators of cyclic cubic fields}

Our conjecture is based on several observations made from the results of the computation and statistics of random matrices.
In this section, for each prime $p$ we will construct sets $M_p^\mathrm{split}, M_p^\mathrm{ram}, M_p^\mathrm{inert} \subseteq \GL_3(\Qbar)$ with the following property:
for each cyclic cubic field $K$ in which $p$ is split, there exists $A \in M_p^\mathrm{split}$ such that $R_p(K) = \det(A)$, and similarly for the other two cases.

We first note that there is a lower bound on the $p$-adic valuation of the regulator, depending on whether the field is ramified or not at $p$.

\begin{lem}
For a field cyclic cubic field $K$ we have
\[ v_p(R_p(K)) \geq \begin{cases} 1, &\text{ if $p$ is ramified in $K$}, \\ 2, &\text{ otherwise.}\end{cases} \]
\end{lem}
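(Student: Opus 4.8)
The plan is to work $p$-adically and exploit the fact that for a cyclic cubic field $K$ we have $r_1 = 3$, $r_2 = 0$, so the regulator matrix is a $2\times 2$ matrix and the units $u_1, u_2$ can be taken to be a $p$-maximal system; all $\delta_i$ equal $1$. The key input is the behaviour of $\log_p$ on units: for any unit $u$ and any embedding $\sigma \colon K \to \C_p$, the element $\sigma(u)$ is a unit in the ring of integers of the local field $\sigma(K)_p$, so some power $\sigma(u)^m$ (with $m$ depending only on the residue field) lies in the group of principal units, and thus $v_p(\log_p \sigma(u)) \geq \tfrac{1}{e(p)}$ where $e(p)$ is the ramification index of $p$ in $K$ (or more precisely, $v_p(\log_p x) \geq 1/(p-1)$-type estimates combined with the local structure). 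Concretely: if $p$ is unramified in $K$, then each completion $K_\mathfrak{p}$ is unramified over $\Q_p$, so $\sigma(u)$ lies in the units of an unramified extension and $\log_p \sigma(u) \equiv 0 \pmod{p}$, i.e. $v_p(\log_p \sigma(u)) \geq 1$ for every $i,j$; if $p$ is ramified (necessarily totally ramified, since $[K:\Q]=3$ is prime and $p \equiv 1 \bmod 3$ forces $e \in \{1,3\}$), then each completion is totally ramified of degree $3$, the uniformiser has valuation $1/3$, and one still gets $v_p(\log_p \sigma(u)) \geq 1/3$ for every entry — but here we only claim the cruder bound coming from the determinant being a rational power of $p$.

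The steps, in order, are as follows. \textbf{Step 1:} Reduce to a $2\times 2$ matrix $M = (\log_p \sigma_i(u_j))_{i,j=1,2}$ whose determinant equals $R_p(K)$ up to a $p$-adic unit, using the remark in the text that a $p$-maximal unit system suffices and that deleting the third row/column does not change the determinant. \textbf{Step 2:} Establish the entrywise estimate $v_p(\log_p \sigma_i(u_j)) \geq 1/e$ where $e$ is the (common) ramification index of $p$ in $K$, using the standard fact that for $x$ in the unit group of a local field, $\log_p x$ lies in the maximal ideal after raising to a prime-to-$p$ power, together with $v_p$-invariance of $\log_p$ under this. \textbf{Step 3:} Conclude $v_p(\det M) \geq 2/e$; since $v_p(R_p(K))$ is in fact an integer (the regulator lies in $\Q_p$ up to units, as $K/\Q$ is Galois so the embeddings are permuted transitively and the determinant is Galois-stable), round up: $v_p(R_p(K)) \geq \lceil 2/e \rceil$. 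For $e = 1$ this gives $\geq 2$; for $e = 3$ this gives $\geq \lceil 2/3 \rceil = 1$. \textbf{Step 4:} Observe $e = 1$ exactly when $p$ is unramified, and $e = 3$ (not $e=1$) exactly when $p$ is ramified, which matches the two cases of the statement.

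The main obstacle is \textbf{Step 2} combined with the integrality claim in Step 3: one must be careful that $v_p(\log_p \sigma(u)) \geq 1/e$ genuinely holds for \emph{every} unit and every embedding (not just fundamental units), which requires checking that passing to a $p$-maximal subgroup does not destroy the estimate — it does not, since the estimate is about images of units in local unit groups and is insensitive to prime-to-$p$ index — and that the resulting lower bound $2/e$ on the valuation of a determinant of elements each of valuation $\geq 1/e$ is correctly upgraded to an integer bound. The latter needs the fact that $R_p(K) \in \Q_p$ up to units (so that $v_p(R_p(K)) \in \Z$), which follows because $K/\Q$ is Galois: $\gal(K/\Q)$ permutes the embeddings $\sigma_i$ and acts on $\{u_j\}$, so $\det M$ is fixed up to sign and up to a unit by the Galois action on $\C_p$, hence lies in $\Q_p^\times$ up to units. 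Everything else is a routine local computation with the $p$-adic logarithm.
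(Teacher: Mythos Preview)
Your approach is correct (for $p > 3$, the paper's standing hypothesis) and genuinely different from the paper's. The paper simply invokes a ready-made lower bound due to Coates \cite{Coa},
\[
v_p(R_p(K)) \;\geq\; v_p\bigl(d(K)^{1/2}\bigr) - v_p(3) - 1 + \sum_{\mathfrak p \mid p} \bigl(f(\mathfrak p|p) + v_p(\nu_\mathfrak p)\bigr),
\]
and reads off both cases by plugging in $v_p(d(K)) \in \{0,2\}$ and $\sum_{\mathfrak p} f(\mathfrak p|p) \in \{1,3\}$. Your argument instead bounds each entry by $v_p(\log_p\sigma_i(u_j)) \geq 1/e$, obtains $v_p(\det) \geq 2/e$, and then rounds up via $R_p(K) \in \Q_p$. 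This is more self-contained and exposes the mechanism that Coates's inequality packages abstractly; the paper's route is shorter and generalises immediately to other totally real abelian fields. Two points in your write-up are worth sharpening: the entrywise estimate relies on $1/e > 1/(p-1)$ (so that $v_p(\log_p(1+y)) = v_p(y)$ on principal units), which holds for $e \in \{1,3\}$ and $p \geq 5$ but would fail at $p = 3$; and the integrality step should invoke the \emph{local} Galois action on the embeddings rather than $\gal(K/\Q)$ --- since $\gal(K_\mathfrak p/\Q_p)$ is cyclic of order $1$ or $3$, it permutes the columns of the $3\times 3$ regulator matrix by an even permutation, so $R_p(K) \in \Q_p$ exactly, not merely up to sign or a unit.
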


\begin{proof}
For a prime ideal $\mathfrak p$ of $K$ lying above $p$ let $\nu_\mathfrak p$ be the number of $p$-power roots of unity in the completion $K_\mathfrak p$.
By \cite[Appendix, Lemma 5]{Coa} we know that
\[ v_p(R_p(K)) \geq v_p({d(K)}^{\frac 1 2}) - v_p(3) - 1 + \sum_{\mathfrak p \mid p\mathcal O_K} ( f(\mathfrak p|p) + v_p(\nu_\mathfrak p)) \geq v_p({d(K)}^{\frac 1 2}) - 1 + \sum_{\mathfrak p \mid p\mathcal O_K} f(\mathfrak p|p), \]
where $f(\mathfrak p|p)$ is the inertia degree of $\mathfrak p$ over $p$.
As $v_p(d(K))=2$ if $p$ is ramified and $v_p(d(K))=0$ otherwise, the claim follows.
\end{proof}

To analyse the $p$-adic regulators, we will make use of the Galois module structure of the unit group:
we denote by $U_K$ the unit group of $\mathcal O_K$ and for a subgroup $U \subseteq U_K$ we set $U^\ast = U/\{-1, 1\}$.
Let $G = \langle \sigma \rangle$ be the Galois group of $K$ over $\Q$.
By \cite{Marko1996} we know that for every prime $p \neq 3$ there exists a unit $\varepsilon_p \in U_K$ such that $[ U_K^\ast : \langle G \varepsilon_p \rangle^\ast ]$ is not divisible by $p$.
It follows that $\langle \varepsilon_p,\sigma(\varepsilon_p) \rangle$ is a subgroup of $U_K$ of full rank and $[ U_K^\ast : \langle \varepsilon_p, \sigma(\varepsilon_p) \rangle^\ast]$ is prime to $p$.
In particular the $p$-adic regulator can be computed using $\varepsilon_p$ and $\sigma(\varepsilon_p)$.

\subsection{The split case}

For a prime $p >3$ let us define 
\[ M_p^{\mathrm{split}} = \left\{ \begin{pmatrix} 1 & 1 & 1 \\ a & b & -b -a \\ -b - a & a & b \end{pmatrix} \, \Bigg| \, a,b \in p\Z_p \right\} \subseteq \operatorname{Mat}_{3 \times 3}(\Z_p). \]

\begin{prop}
Let $K$ be a cyclic cubic field and $p > 3$ a prime which splits in $K$.
Then there exists $A \in M_p^{\mathrm{split}}$ such that $R_p(K) = \det(A)$.
\end{prop}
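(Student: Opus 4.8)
The plan is to compute the $p$-adic regulator using the special units $\varepsilon_p$ and $\sigma(\varepsilon_p)$ guaranteed by \cite{Marko1996}, and then exploit the fact that $p$ splits completely in $K$ together with the Galois action to put the regulator matrix into the claimed shape. Write $G = \langle \sigma \rangle$ with $\sigma^3 = 1$, and set $\varepsilon = \varepsilon_p$, so that $\varepsilon, \sigma(\varepsilon)$ are multiplicatively independent units generating a subgroup of $U_K^\ast$ of index prime to $p$. By the remark following the lemma, $R_p(K)$ can be computed from these two units up to a $p$-adic unit, and using the Iwasawa definition (adding a row of $1$'s) we get $R_p(K) = \det M$ where $M$ is the $3\times 3$ matrix whose first row is $(1,1,1)$ and whose remaining two rows are $\big(\log_p \sigma_i(\varepsilon)\big)_i$ and $\big(\log_p \sigma_i(\sigma(\varepsilon))\big)_i$, for the three real embeddings $\sigma_1,\sigma_2,\sigma_3$ of $K$ into $\C_p$ (all $\delta_i = 1$ since $K$ is totally real).

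Next I would use that $p$ splits completely: there are three primes $\mathfrak p_1,\mathfrak p_2,\mathfrak p_3$ above $p$, each with completion $\Q_p$, and the embeddings $\sigma_i$ correspond to these. The Galois action permutes the $\mathfrak p_i$ cyclically, so $\sigma_i(\sigma(\varepsilon)) = \sigma_{\pi(i)}(\varepsilon)$ for a $3$-cycle $\pi$; after relabelling we may take $\pi = (1\,2\,3)$. Hence the second and third rows are cyclic shifts of the vector $(x_1,x_2,x_3)$ where $x_i = \log_p \sigma_i(\varepsilon) \in \Q_p$. Because $\varepsilon$ is a unit, $\sigma_i(\varepsilon) \in \Z_p^\times$, and since $p > 3$ we have $p - 1 \geq 2$, hence $\log_p$ maps $\Z_p^\times$ into $p\Z_p$ (indeed $\log_p(\Z_p^\times) = p\Z_p$ for $p$ odd); thus $x_i \in p\Z_p$. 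Finally $\sum_i x_i = \log_p \mathrm{N}_{K/\Q}(\varepsilon) = \log_p(\pm 1) = 0$, so $x_3 = -x_1 - x_2$. Setting $a = x_1$, $b = x_2$ gives exactly the matrix in the definition of $M_p^{\mathrm{split}}$, with the rows being $(1,1,1)$, $(a,b,-a-b)$, $(-a-b,a,b)$; so $A \in M_p^{\mathrm{split}}$ and $\det A = R_p(K)$ up to a $p$-adic unit. To get equality on the nose rather than up to units, I would note that changing $\varepsilon$ within its $G$-orbit, or multiplying by the correct power-prime-to-$p$ index factor, does not change $v_p(\det A)$, and a more careful bookkeeping of the index (using that $\langle G\varepsilon\rangle^\ast$ has index prime to $p$ and the Iwasawa matrix computes the regulator of that subgroup, which equals $[U_K^\ast:\langle G\varepsilon\rangle^\ast] \cdot R_p(K)$) lets one absorb the discrepancy; alternatively one replaces $\varepsilon$ by a genuine generator of a $p$-maximal $G$-stable subgroup.

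The main obstacle I expect is the bookkeeping in this last step: reconciling "up to $p$-adic units" (which is all that the unit-replacement principle gives directly) with the exact equality $R_p(K) = \det(A)$ claimed in the proposition. One has to be careful that the index $[U_K^\ast : \langle \varepsilon, \sigma(\varepsilon)\rangle^\ast]$ is itself a $p$-adic unit and that scaling the generating set by such a unit index can be matched by an appropriate choice of $a, b \in p\Z_p$ — i.e. that the set $M_p^{\mathrm{split}}$ is large enough to realise $\det(A) = R_p(K)$ exactly and not merely $v_p(\det A) = v_p(R_p(K))$. This is a matter of tracking how $\det$ scales under $\GL$-changes of the unit lattice and verifying the scaled entries still lie in $p\Z_p$, which they do since $p\Z_p$ is an ideal stable under multiplication by $\Z_p^\times$. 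Everything else — the shape of the matrix, the vanishing row sum, the membership $x_i \in p\Z_p$ — is routine given the splitting hypothesis and $p > 3$.
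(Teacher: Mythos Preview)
Your approach matches the paper's proof essentially line for line: take $\varepsilon = \varepsilon_p$, use that the Galois action cyclically permutes the $p$-adic embeddings so the second and third rows of the Iwasawa matrix are cyclic shifts of $(\log_p\tau_i(\varepsilon))_i$, and conclude from $\log_p(\Z_p^\times)\subseteq p\Z_p$ and $\sum_i\log_p\tau_i(\varepsilon)=\log_p N_{K/\Q}(\varepsilon)=0$. Your lingering worry about exact equality versus equality up to $p$-adic units is moot here: as the paper itself notes, $R_p(K)$ is only well defined up to $p$-adic units, so the statement ``$R_p(K)=\det(A)$'' is to be read modulo units, and the paper's own proof makes no attempt to do better.
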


\begin{proof}
We set $\varepsilon = \varepsilon_p$.
Let $\mu \in \Z[X]$ be the minimal polynomial of $\varepsilon$.
Since $p$ splits we know that $\mu$ factors over $\Z_p$ as $\mu = (X-\tau_1(\varepsilon))(X-\tau_2(\varepsilon))(X-\tau_3(\varepsilon))$, where $\tau_1,\tau_2,\tau_3 \colon K \to \Qbar$ are the distinct $p$-adic embeddings.
As $f$ is also the minimal polynomial of $\sigma(\varepsilon)$, we may assume that $\tau_1(\varepsilon) = \tau_2(\sigma(\varepsilon))$, $\tau_2(\varepsilon) = \tau_3(\sigma(\varepsilon))$ and $\tau_3(\varepsilon) = \tau_1(\sigma(\varepsilon))$.
In particular, the $p$-adic regulator can be computed as
\[ R_p(K) = \det\begin{pmatrix} 1 & 1 & 1 \\ \log_p(\tau_1(\varepsilon)) & \log_p(\tau_2(\varepsilon)) & \log_p(\tau_3(\varepsilon)) \\ \log_p(\tau_3(\varepsilon)) & \log_p(\tau_1(\varepsilon)) & \log_p(\tau_2(\varepsilon))
\end{pmatrix}. \]
Now as $\tau_i(\varepsilon) \in 1 + p\Z_p$ we have $\log_p(\tau_i(\varepsilon)) \in p\Z_p$
(for all $n \geq 1$ the restriction of $\log_p$ induces an isomorphism $1 + p^n \Z_p \to p^n \Z_p$).
Finally $\varepsilon$ being a unit implies $\sum_{i=1}^3 \log_p(\tau_i(\varepsilon)) = 0$.
\end{proof}

The set $M_p^\mathrm{split}$ is isomorphic to $\Z_p^2$ via
\[ \Z_p^2 \longrightarrow M_p^\mathrm{split}, \; (a,b) \longmapsto \begin{pmatrix}  1 & 1 & 1 \\ pa & pb & -pb -pa \\ -pb - pa & pa & pb \end{pmatrix}. \]
Using this isomorphism, we equip $M_p^\mathrm{split}$ with its unique Haar measure $\mu$ such that $\mu(M_p^\mathrm{split}) = 1$ and consider the random variable $M_p^\mathrm{split} \to \R_{\geq 0}, \; A \mapsto v_p(\det(A))$.

\subsection{The ramified case}

Let $p$ be a prime with $p \equiv 1 \bmod 3$. 
As the group $\F_p^\times / (\F_p^\times)^3$ has order $3$ and $\Q_p$ contains third roots of unity, there are exactly three cubic ramified extensions $K_1,K_2,K_3$ of $\Q_p$, which are cyclic and totally ramified.
We denote their valuations rings by $\mathcal O_1,\mathcal O_2$ and $\mathcal O_3$. Let $\operatorname{Gal}(K_i|\Q_p) = \langle \sigma_i \rangle$.
For $i = 1,2,3$ we set
\[ M_{p,i}^\mathrm{ram} = \left\{ \begin{pmatrix}  1 & 1 & 1 \\ a & \sigma_i(a) & \sigma_i^2(a) \\ \sigma_i(a) & \sigma_i^2(a) & a \end{pmatrix} \; \Bigg| \; a \in \mathcal O_i, \; a \neq 0, \;\operatorname{Tr}_{K_i|\Q_p}(a) = 0 \, \right\} \subseteq \GL_3(\Qbar) \]
and $M_p^\mathrm{ram} = M_{p,1}^\mathrm{ram} \cup M_{p,2}^\mathrm{ram} \cup M_{p,3}^\mathrm{ram} \subseteq \GL_3(\Qbar)$.
Since for $a \in K_i$, $a \neq 0$, we have $\operatorname{Tr}_{K_i|\Q_p}(a) = 0$ if and only if $a \not\in \Q_p$ this is in fact a disjoint union $M_p^\mathrm{ram} = M_{p,1}^\mathrm{ram} \mathbin{\dot\cup} M_{p,2}^\mathrm{ram} \mathbin{\dot\cup} M_{p,3}^\mathrm{ram}$.

\begin{prop}\label{prop:inert}
Let $K$ be a cyclic cubic field and $p > 3$ a prime which is ramified in $K$.
Then there exists $A \in M_p^\mathrm{ram}$ such that $R_p(K) = \det(A)$.
\end{prop}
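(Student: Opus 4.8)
The plan is to run the same argument as in the split case, with the factorisation of the minimal polynomial over $\Z_p$ replaced by passage to the completion at $p$. Since $p \equiv 1 \bmod 3$ and $[K:\Q]=3$ is prime, a prime $p$ ramified in $K$ is in fact totally ramified: there is a unique prime $\mathfrak p$ above $p$, with $e(\mathfrak p|p) = 3$ and $f(\mathfrak p|p) = 1$. Hence $K_\mathfrak p$ is a cyclic totally ramified cubic extension of $\Q_p$, so $K_\mathfrak p = K_i$ for exactly one $i\in\{1,2,3\}$. First I would fix a field embedding $\tau_1 \colon K \hookrightarrow K_i \subseteq \Qbar$ realising $K_\mathfrak p = K_i$; then the three $p$-adic embeddings of $K$ are $\tau_1,\ \sigma_i\circ\tau_1,\ \sigma_i^2\circ\tau_1$, and since the decomposition group at $\mathfrak p$ is all of $G$, after possibly replacing $\sigma$ by $\sigma^2$ (which changes neither $R_p(K)$ nor the subgroup $\langle \varepsilon_p, \sigma(\varepsilon_p)\rangle$ up to torsion) we may assume $\tau_1\circ\sigma = \sigma_i\circ\tau_1$.

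Set $\varepsilon = \varepsilon_p$ and $a := \log_p(\tau_1(\varepsilon)) \in K_i$. As in the split case, $R_p(K)$ is the determinant of the Iwasawa-style matrix
\[ \begin{pmatrix} 1 & 1 & 1 \\ \log_p\tau_1(\varepsilon) & \log_p\tau_2(\varepsilon) & \log_p\tau_3(\varepsilon) \\ \log_p\tau_1(\sigma(\varepsilon)) & \log_p\tau_2(\sigma(\varepsilon)) & \log_p\tau_3(\sigma(\varepsilon)) \end{pmatrix}, \]
with $\tau_2 = \sigma_i\circ\tau_1$ and $\tau_3 = \sigma_i^2\circ\tau_1$. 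Using $\tau_1\circ\sigma = \sigma_i\circ\tau_1$ and that $\log_p$ commutes with the continuous automorphisms of $K_i$, the last two rows become $(a,\sigma_i(a),\sigma_i^2(a))$ and $(\sigma_i(a),\sigma_i^2(a),a)$; hence $R_p(K) = \det(A)$ with $A$ of exactly the shape defining $M_{p,i}^{\mathrm{ram}}$. (As in the split case this equality is read up to $p$-adic units, or made exact by additionally invoking that $[U_K^\ast : \langle \varepsilon_p, \sigma(\varepsilon_p)\rangle^\ast]$ is prime to $p$.)

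It then remains to check that $A \in M_{p,i}^{\mathrm{ram}}$, i.e. that $a \in \mathcal O_i$, $\operatorname{Tr}_{K_i|\Q_p}(a) = 0$ and $a \neq 0$. Integrality: $\tau_1(\varepsilon)$ is a unit of $\mathcal O_i$, and since $p > 3$ and $e(K_i|\Q_p)=3$ the $p$-adic logarithm maps $\mathcal O_i^\times$ into $\mathfrak m_i \subseteq \mathcal O_i$ (it kills the prime-to-$p$ roots of unity and, on $1+\mathfrak m_i$, the series converges into $\mathfrak m_i$ because the valuation of $\mathfrak m_i$ exceeds $1/(p-1)$). Trace: $\operatorname{Tr}_{K_i|\Q_p}(a) = a + \sigma_i(a) + \sigma_i^2(a) = \log_p\big(\tau_1(\varepsilon)\tau_2(\varepsilon)\tau_3(\varepsilon)\big) = \log_p\big(N_{K|\Q}(\varepsilon)\big) = \log_p(\pm 1) = 0$. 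Non-vanishing: on $1+\mathfrak m_i$ the map $\log_p$ is injective (as $K_i$, a cubic extension of $\Q_p$ with $p>3$, contains no $p$-th root of unity), so $a = 0$ would force $\tau_1(\varepsilon)$ to be a root of unity of $K_i$; as $\tau_1$ is injective, $\varepsilon$ would then be a root of unity of $K$, i.e. $\varepsilon = \pm 1$ — contradicting that $\langle\varepsilon_p,\sigma(\varepsilon_p)\rangle$ has full rank, so $\varepsilon_p$ has infinite order.

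The argument is essentially bookkeeping. The steps needing real care are the compatibility $\tau_1\circ\sigma = \sigma_i\circ\tau_1$ of the cyclic action of $G$ with that of $\operatorname{Gal}(K_i|\Q_p)$ — this is what produces the precise circulant shape of $A$ — and the non-vanishing $a \neq 0$, the one point where one genuinely uses that $\varepsilon_p$ is not a root of unity. I do not anticipate any serious obstacle beyond carefully keeping track of these identifications of embeddings and automorphisms.
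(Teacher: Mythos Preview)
Your proof is correct and follows essentially the same approach as the paper: complete at the unique prime above $p$, identify $K_\mathfrak p$ with one of the $K_i$, and use the cyclic Galois action together with $\log_p$ commuting with automorphisms to obtain the circulant shape. The paper's version is terser --- it simply writes down the matrix and says ``the claim follows'' --- whereas you carefully verify membership in $M_{p,i}^{\mathrm{ram}}$ (integrality of $a$, the trace-zero condition, and $a \neq 0$) and make explicit the compatibility $\tau_1\circ\sigma = \sigma_i\circ\tau_1$; these are exactly the details the paper leaves implicit.
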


\begin{proof}
We set $\varepsilon = \varepsilon_p$.
Let $\mathfrak p$ be the prime ideal of $K$ lying above $p$ and $K_\mathfrak p$ be the $\mathfrak p$-adic completion.
As $\varepsilon \in K \subseteq K_\mathfrak p$ and $K_\mathfrak p|\Q_p$ is cyclic with Galois group $\langle \tau \rangle$, the $p$-adic embeddings of $\varepsilon$ are just $\varepsilon,\tau(\varepsilon), \tau^2(\varepsilon)$.
Thus
\[ R_p(K) = \begin{pmatrix} 1 & 1 & 1 \\ \log_p(\varepsilon) & \log_p(\tau(\varepsilon)) & \log_p(\tau^2(\varepsilon)) \\ \log_p(\tau(\varepsilon)) & \log_p(\tau^2(\varepsilon)) & \log_p(\varepsilon) \end{pmatrix}. \]
As $\tau$ commutes with $\log_p$ and $\varepsilon$ is a unit, the claim follows using the fact that $K_\mathfrak p \cong K_i$ for some $i = 1,2,3$.
\end{proof}

\subsection{The inert case}

For a prime $p > 3$ denote by $\smash{\Q_p^{(3)}}$ the unique unramified cubic extension of $\Q_p$ and by $\smash{\Z_p^{(3)}}$ its ring of integers.
This is a cyclic Galois extension with Galois group $\smash{\gal(\Q_p^{(3)}|\Q)} = \langle \tau \rangle$.
We define
\[ M_p^{\mathrm{inert}} = \left\{ \begin{pmatrix} 1 & 1 & 1 \\ a & \tau(a) & \tau^2(a) \\ \tau(a) & \tau^2(a) & a \end{pmatrix} \; \Bigg| \; a \in \Z_p^{(3)}, \, a \neq 0, \, \operatorname{Tr}_{\Q_p^{(3)}|\Q_p}(a) = 0 \right\}. \]

\begin{prop}
Let $K$ be a cyclic cubic field and $p > 3$ a prime which is inert in $K$.
Then there exists $A \in M_p^\mathrm{inert}$ such that $\det(A) = R_p(K)$.
\end{prop}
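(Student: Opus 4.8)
The plan is to imitate the proofs of the split and ramified cases, the only change being the local field at $p$. As before, set $\varepsilon = \varepsilon_p$, the unit furnished by the Galois module structure of $U_K$, so that, as noted above, $R_p(K)$ can be computed from $\varepsilon$ and $\sigma(\varepsilon)$. First I would record that $\varepsilon$ is of infinite order: since $\langle\varepsilon,\sigma(\varepsilon),\sigma^2(\varepsilon)\rangle^\ast$ has finite index in the rank-$2$ group $U_K^\ast$, it has rank $2$, which would be impossible if $\varepsilon$ — hence each of its Galois conjugates — were a root of unity. Since $p$ is inert in $K$, there is a single prime $\mathfrak p$ above $p$ with $f(\mathfrak p|p)=3$; hence the completion $K_\mathfrak p$ is $\Q_p$-isomorphic to the unramified cubic extension $\Q_p^{(3)}$, and we may identify $K_\mathfrak p$ with $\Q_p^{(3)}$ so that $\sigma$ corresponds to the fixed generator $\tau$ of $\operatorname{Gal}(\Q_p^{(3)}|\Q_p)$. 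The $p$-adic embeddings of $\varepsilon$ are then $\varepsilon,\tau(\varepsilon),\tau^2(\varepsilon)$, and, proceeding as in the previous proofs, one obtains
\[ R_p(K) = \det\begin{pmatrix} 1 & 1 & 1 \\ \log_p(\varepsilon) & \log_p(\tau(\varepsilon)) & \log_p(\tau^2(\varepsilon)) \\ \log_p(\tau(\varepsilon)) & \log_p(\tau^2(\varepsilon)) & \log_p(\varepsilon) \end{pmatrix}. \]

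Next I would put $a = \log_p(\varepsilon)$. As $\varepsilon$ is a unit of $\Z_p^{(3)}$ we have $a \in p\Z_p^{(3)} \subseteq \Z_p^{(3)}$, and since $\tau$ is a continuous automorphism fixing $\Q_p$ it commutes with $\log_p$, so $\tau^j(a) = \log_p(\tau^j(\varepsilon))$; thus the matrix above is precisely the one attached to $a$ in the definition of $M_p^{\mathrm{inert}}$, and it remains to verify the two side conditions on $a$. For $a \neq 0$: the kernel of $\log_p$ on $(\Z_p^{(3)})^\times = \mu \times (1 + p\Z_p^{(3)})$ is the group $\mu$ of roots of unity — for $p > 3$, $\log_p$ restricts to an isomorphism $1 + p\Z_p^{(3)} \to p\Z_p^{(3)}$, just as over $\Z_p$ — and $\varepsilon$ is not a root of unity since it has infinite order and $K \hookrightarrow \Q_p^{(3)}$ is injective. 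For $\operatorname{Tr}_{\Q_p^{(3)}|\Q_p}(a) = 0$: additivity of $\log_p$ gives $\operatorname{Tr}_{\Q_p^{(3)}|\Q_p}(a) = \sum_{j=0}^{2}\log_p(\tau^j(\varepsilon)) = \log_p\bigl(N_{\Q_p^{(3)}|\Q_p}(\varepsilon)\bigr)$, and $N_{\Q_p^{(3)}|\Q_p}(\varepsilon) = N_{K|\Q}(\varepsilon) = \pm 1$ because $K|\Q$ is Galois and $\varepsilon \in \mathcal O_K^\times$; hence the trace is $\log_p(\pm1) = 0$. Therefore the matrix $A$ lies in $M_p^{\mathrm{inert}}$ and $\det(A) = R_p(K)$.

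I do not anticipate a genuine obstacle: once the analogy with the split and ramified cases is set up, the argument is routine. The two points that merit a little care are the verification that $\varepsilon_p$ is non-torsion — this is exactly what forces $a \neq 0$, and it relies on the cited description of the Galois module structure of $U_K$ — and the identification of $K_\mathfrak p$ with $\Q_p^{(3)}$; here, unlike in the ramified case, there is a \emph{unique} unramified cubic extension of $\Q_p$, which is why $M_p^{\mathrm{inert}}$ involves no union over several local fields.
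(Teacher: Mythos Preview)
Your proof is correct and follows the same approach as the paper's: identify the completion $K_{\mathfrak p}$ with the unique unramified cubic extension $\Q_p^{(3)}$, note that the local Galois action matches the global one, and then argue exactly as in the ramified case. In fact the paper's proof is a two-line reference back to Proposition~\ref{prop:inert}; you have supplied the details it omits, in particular the verification that $a=\log_p(\varepsilon)\neq 0$ via the fact that $\varepsilon_p$ has infinite order, which the paper leaves implicit.
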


\begin{proof}
Let $\mathfrak p$ be the prime ideal of $K$ lying above $p$. Then $K_\mathfrak p \cong \smash{\Q_p^{(3)}}$.
Morevover, the Galois group $\gal(K_\mathfrak p|\Q_p)$ is cyclic and the restriction $\gal(K_\mathfrak p|\Q_p) \to \gal(K|\Q)$ is an isomorphism.
The proof is now the same as in Proposition~\ref{prop:inert}.
\end{proof}

Let $p > 3$ be a prime.
Note that for each $x$, the set $M_p^x$ is a subset of $\operatorname{Mat}_{3 \times 3}(\mathcal O)$ for some ring of integers $\mathcal O$ of a finite extension of $\Q_p$ and therefore is naturally equipped with canonical finite measure allowing us to speak of random variables on $M_p^x$.
Based on numerical observations, we make the following conjecture:

\begin{conjecture}\label{conj1}
Let $p > 3$ be a prime. For $x \in \{\mathrm{split},\,\mathrm{ram},\,\mathrm{inert}\}$ and $i \in \Z_{\geq 0}$ we have
\[ \lim_{D \to \infty} \frac{\#\{ K \in \mathcal K_p^x \mid v_p(R_p(K)) = i \text{ and } \lvert d(K) \rvert \leq D \}}{\#\{ K \in \mathcal K_p^x \mid \lvert d(K) \rvert \leq D \}} = \pr(X = i), \]
where $X \colon M_p^x \to \R_{\geq 0},\; A \mapsto v_p(\det(A))$.
\end{conjecture}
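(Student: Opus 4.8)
Conjecture~\ref{conj1} is an equidistribution statement, and the natural approach is to reduce it to a local question about units. Fix a splitting type $x\in\{\mathrm{split},\mathrm{ram},\mathrm{inert}\}$. By the three Propositions of this section, every $K\in\mathcal K_p^x$ produces a matrix $A_K\in M_p^x$ with $R_p(K)=\det(A_K)$; although $A_K$ depends on the choice of $\varepsilon_p$ and of the $p$-adic embeddings, the number $v_p(R_p(K))=v_p(\det A_K)=X(A_K)$ does not. Now $M_p^x$ is, up to a set of measure zero, a compact subset of $\operatorname{Mat}_{3\times 3}(\mathcal O)$ for a ring of integers $\mathcal O$, and $\det$ is continuous, so for each finite $i$ the fibre $X^{-1}(i)=\{A:v_p(\det A)=i\}$ is clopen; hence $\mathbf 1_{X^{-1}(i)}$ is locally constant and factors through a reduction $M_p^x\to M_p^x\bmod p^N$ for some $N=N(p,i)$. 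It is therefore enough to prove that, for every $N$, the reductions $A_K\bmod p^N$ --- with $K$ ranging over $\mathcal K_p^x$ ordered by $\lvert d(K)\rvert$ --- become equidistributed in $M_p^x\bmod p^N$ with respect to the push-forward of the canonical measure; evaluating this on the clopen set $X^{-1}(i)$ then yields the conjecture. (The densities of the three local types in $\mathcal K(D)$, which one additionally needs to pass from Conjecture~\ref{conj1} to Conjecture~\ref{conj}, follow from class field theory and the conductor parametrisation recalled earlier, and are not the difficulty.)

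Unwinding the definitions, the matrix $A_K$ is built from the $p$-adic logarithms $\log_p\sigma_i(\varepsilon_p)$ of the conjugates of $\varepsilon_p$ --- which by \cite{Marko1996} may be chosen so that its $G$-orbit generates a sub-$G$-module of $U_K^\ast$ of index prime to $p$ --- so that $A_K\bmod p^N$ depends only on their classes modulo $p^N$. The content of Conjecture~\ref{conj1} is thus that, as $K$ ranges over cyclic cubic fields with prescribed behaviour at $p$ ordered by discriminant, the $p$-adic logarithm of a prime-to-$p$ generator of the cyclic $G$-module $U_K^\ast\otimes\Z_p$ equidistributes in the trace-zero part of $\mathcal O_{K_\mathfrak p}/p^N$. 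Granting such an input, the remaining work is a finite computation: in the split case one computes $\det A=3p^2(a^2+ab+b^2)$ for $A\in M_p^{\mathrm{split}}$ parametrised by $(a,b)\in\Z_p^2$, so $X$ becomes the $p$-adic valuation of a binary quadratic form at a Haar-random point of $\Z_p^2$, a law one writes down explicitly by factoring $a^2+ab+b^2$ as a norm form (split over $\Z_p$ when $p\equiv 1\bmod 3$, attached to the unramified quadratic extension of $\Q_p$ when $p\equiv 2\bmod 3$); the ramified and inert cases are analogous, with the relevant norm forms and trace-zero conditions, and one checks the resulting probabilities agree with those predicted in Conjecture~\ref{conj}.

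The obstacle --- and the reason this remains a conjecture --- is precisely the equidistribution of the $p$-adic images of units across the family. There is at present no technique controlling $\log_p\varepsilon_p$ even modulo $p$, let alone modulo $p^N$, uniformly over an infinite family of number fields; already the far weaker statement that $\log_p\varepsilon_p\neq 0$ for every $K$ amounts to Leopoldt's conjecture for cyclic cubic fields, and nothing is known about the finer distribution. One could try to make the units explicit through the conductor--subgroup parametrisation, or through the Kummer description of $K(\zeta_3)$ over $\Q(\zeta_3)$, and then attempt an analytic count, but writing down a fundamental unit in such terms is itself intractable; alternatively, the $p$-adic class number formula expresses $R_p(K)$, up to controlled factors, through the $p$-adic $L$-values of the nontrivial characters of $K$, reducing the conjecture to a distribution statement for the valuations of those $L$-values, which is equally out of reach. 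In the absence of any such input, the justification we offer is heuristic: we model the non-forced entries of the regulator matrix as Haar-random subject only to the visible constraints --- the adjoined row of ones being fixed, the $G$-equivariance relating the two remaining rows, the unit relation $\sum_i\log_p\sigma_i(\varepsilon_p)=0$, and the requirement that $\log_p$ carry each conjugate of $\varepsilon_p$ into the maximal ideal (which accounts for the valuation bounds of the Lemma) --- which are exactly the defining conditions of $(M_p^x,X)$ above, and the numerical data reported in this paper is the evidence that no further hidden constraint intervenes. Making this $p$-adic equidistribution of units rigorous is the step we expect to be the genuine obstruction to a proof.
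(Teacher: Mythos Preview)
The statement you are addressing is labelled a \emph{Conjecture} in the paper, and the paper offers no proof of it; the authors justify it solely by the random-matrix heuristic and by the numerical tables in Section~5. Your proposal is therefore not in competition with a proof in the paper --- there is none --- and you have, correctly, written a heuristic discussion rather than a proof. Your identification of the genuine obstruction (equidistribution of $\log_p\varepsilon_p$ modulo $p^N$ across the family, which already subsumes Leopoldt) matches the paper's own stance, and your reduction to the study of $X$ on $M_p^x$ is exactly the programme the paper carries out in Section~4.

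One point deserves correction. You assert that, once equidistribution is granted, the computation of $\pr(X=i)$ in the inert case is ``analogous'' to the split and ramified cases and reduces to a norm-form count. The paper does \emph{not} manage this: for $M_p^{\mathrm{split}}$ and $M_p^{\mathrm{ram}}$ the authors compute the determinant explicitly and reduce the distribution of $X$ to that of the form $X^2+3Y^2$ (Lemmas~8 and~9), but for $M_p^{\mathrm{inert}}$ they are unable to do so and instead state the desired identification of $X$ with $X_f+2$ as a \emph{separate} open conjecture (Conjecture~\ref{conj:inert}), supported only by numerical tests. So the ``finite computation'' you promise in the inert case is, in the paper's treatment, itself conjectural; your sketch glosses over this second gap.
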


\section{Distribution of determinants of random matrices}

For each $x \in \{\mathrm{split}, \mathrm{ram}, \mathrm{inert}\}$ we now compute the distribution of the random variable $M_p^x \to \R_{\geq 0},\; A \mapsto v_p(\det(A))$ and use this to make Conjecture~\ref{conj1} about the distributions of the valuations of $p$-adic regulators more explicit and accessible to numerical investigations.

For a quadratic form $g \in \Z_p[X,Y]$ we denote by $X_g$ the random variable $X_g \colon \Z_p^2 \to \R_{\geq 0}, \, (a,b) \mapsto v_p(g(a,b))$. From the definition of equivalence of quadratic forms we immediately obtain the following consequence for the associated random variables.
If $h \in \Z_p[X,Y]$ is another quadratic form, we write $g \sim h$ if $g$ and $h$ are equivalent, that is, if there exists $B \in \operatorname{GL}_2(\Z_p)$ such that $g(X,Y) = h((X,Y)B)$.

\begin{lem}\label{lem:dist}
Let $g, h \in \Z_p[X,Y]$ be quadratic forms and $\alpha \in \Z_p$ such that $g \sim \alpha h$.
Then $X_g$ is in distribution equal to $X_h + v_p(\alpha)$.
\end{lem}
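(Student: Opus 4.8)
The plan is to unwind the definitions of the two random variables and exploit the change of variables coming from the equivalence $g \sim \alpha h$. First I would recall that $X_g(a,b) = v_p(g(a,b))$ and, since $g(X,Y) = \alpha h((X,Y)B)$ for some $B \in \GL_2(\Z_p)$, we have
\[ v_p(g(a,b)) = v_p(\alpha) + v_p\bigl(h((a,b)B)\bigr) = v_p(\alpha) + X_h\bigl((a,b)B\bigr) \]
for every $(a,b) \in \Z_p^2$. Thus, as functions on $\Z_p^2$, we have the pointwise identity $X_g = v_p(\alpha) + X_h \circ \varphi_B$, where $\varphi_B \colon \Z_p^2 \to \Z_p^2$ is the linear map $(a,b) \mapsto (a,b)B$.

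The key step is then to observe that $\varphi_B$ is a measure-preserving bijection of $\Z_p^2$ with respect to the normalized Haar measure. Indeed $B \in \GL_2(\Z_p)$ has $\det(B) \in \Z_p^\times$, so $\varphi_B$ is an automorphism of the compact group $\Z_p^2$; any continuous automorphism of a compact group preserves Haar measure, or one can argue directly that the module of $\varphi_B$ is $\lvert \det B \rvert_p = 1$. Consequently $X_h \circ \varphi_B$ has the same distribution as $X_h$: for each $i \in \Z_{\geq 0}$,
\[ \pr(X_h \circ \varphi_B = i) = \mu\bigl(\varphi_B^{-1}(X_h^{-1}(i))\bigr) = \mu\bigl(X_h^{-1}(i)\bigr) = \pr(X_h = i). \]
Adding the constant $v_p(\alpha)$ shifts the distribution accordingly, which is exactly the assertion that $X_g$ is in distribution equal to $X_h + v_p(\alpha)$.

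I do not expect a serious obstacle here; the statement is essentially a formal consequence of the definition of equivalence of quadratic forms together with $\GL_2(\Z_p)$-invariance of Haar measure. The only point that needs a word of care is the measure-preservation of $\varphi_B$ — one should make explicit that it is the invertibility of $B$ over $\Z_p$ (not merely over $\Q_p$) that guarantees $\varphi_B$ restricts to a bijection of $\Z_p^2$ onto itself, and hence that no measure is lost. Everything else is bookkeeping.
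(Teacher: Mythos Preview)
Your argument is correct and is precisely the unwinding of definitions that the paper has in mind; in fact the paper does not give a proof at all, remarking only that the lemma follows immediately from the definition of equivalence of quadratic forms. Your write-up simply makes explicit the measure-preservation of the $\GL_2(\Z_p)$-action that underlies this ``immediate'' claim.
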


\begin{lem}
The random variable $M_p^\mathrm{split} \to \R_{\geq 0}, \; A \mapsto v_p(\det(A))$ is in distribution equal to $X_f + 2$, where $f = X^2 + 3Y^2$.
\end{lem}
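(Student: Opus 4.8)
The plan is to reduce the statement to a single determinant computation followed by an equivalence of binary quadratic forms. First I would parametrize $A \in M_p^\mathrm{split}$ by $(a,b) \in \Z_p^2$ via the displayed isomorphism and expand $\det(A)$ directly. Writing $x = pa$, $y = pb$ and $z = -pa-pb$, expansion along the first row gives $\det(A) = x^2 + y^2 + z^2 - xy - yz - zx$; since $x+y+z = 0$ this simplifies to $3(x^2 + xy + y^2) = 3p^2(a^2 + ab + b^2)$. As $p > 3$ we have $v_p(3) = 0$, so $v_p(\det(A)) = 2 + v_p(a^2 + ab + b^2)$.

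Next I would transport this through the measure-preserving isomorphism $\Z_p^2 \cong M_p^\mathrm{split}$ of the preceding paragraph: the Haar measure on $M_p^\mathrm{split}$ is by definition the pushforward of the normalized Haar measure on $\Z_p^2$, so the random variable $A \mapsto v_p(\det(A))$ has the same distribution as $(a,b) \mapsto 2 + v_p(g(a,b))$, where $g = X^2 + XY + Y^2$. In the notation of the paper, this random variable is $X_g + 2$ in distribution.

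It then remains to identify $X_g$ with $X_f$, and here I would apply Lemma~\ref{lem:dist} with $\alpha = 1$, which requires checking $g \sim f$ over $\Z_p$. Completing the square, for $B = \left(\begin{smallmatrix} 1 & 0 \\ 1/2 & 1/2 \end{smallmatrix}\right)$ one has $f((X,Y)B) = (X + \tfrac{Y}{2})^2 + 3(\tfrac{Y}{2})^2 = X^2 + XY + Y^2 = g(X,Y)$; since $p$ is odd, $1/2 \in \Z_p$ and $\det(B) = 1/2 \in \Z_p^\times$, so $B \in \GL_2(\Z_p)$ and indeed $g \sim f$. Lemma~\ref{lem:dist} then yields that $X_g$ and $X_f$ are equal in distribution, and combining with the previous paragraph shows that $A \mapsto v_p(\det(A))$ on $M_p^\mathrm{split}$ is $X_f + 2$ in distribution.

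I do not expect a genuine obstacle here: the determinant expansion is the only computation of substance, and the one point needing care is that the change of variables $B$ witnessing $g \sim f$ must lie in $\GL_2(\Z_p)$ and not merely $\GL_2(\Q_p)$ — this is precisely where $p \neq 2$ (guaranteed by $p > 3$) enters. If one wished to avoid fractions entirely, an alternative would be to check that $g$ and $f$ have the same discriminant up to squares in $\Z_p^\times$ and both primitively represent a unit, but the explicit $B$ above is the most direct route.
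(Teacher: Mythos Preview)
Your proof is correct and follows essentially the same route as the paper: compute $\det(A) = 3p^2(a^2+ab+b^2)$, conclude the random variable equals $X_g + 2$ for $g = X^2+XY+Y^2$, and then invoke Lemma~\ref{lem:dist} using $g \sim f$ over $\Z_p$. The paper is terser at the last step (it simply writes ``the claim follows from applying Lemma~\ref{lem:dist}''), whereas you supply the explicit $B \in \GL_2(\Z_p)$ witnessing the equivalence; this extra detail is harmless and arguably clearer.
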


\begin{proof}
For an element
\[ A = \begin{pmatrix}  1 & 1 & 1 \\ pa & pb & -pb -pa \\ -pb - pa & pa & pb \end{pmatrix} \]
of $M_p^\mathrm{split}$
we have $\det(A) = 3p^2 (a^2 + b^2 + ab)$.
Therefore $M_p^\mathrm{split} \to \R_{\geq 0}, \; A \mapsto v_p(\det(A))$ is in distribution equal to $X_h$, where $h = 3p^2(X^2 + Y^2 + XY) \in \Z_p[X,Y]$.
The claim follows from applying Lemma~\ref{lem:dist}.
\end{proof}

\begin{lem}
Let $p \equiv 1 \bmod 3$. The random variable $M_p^\mathrm{ram} \to \R_{\geq 0}, \; A \mapsto v_p(\det(A))$ is in distribution equal to $X_f + 1$,
where $f = X^2 + 3Y^2$.
\end{lem}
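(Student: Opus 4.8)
The plan is to mimic the split case: compute $\det(A)$ for a generic $A \in M_p^\mathrm{ram}$ in terms of the parameter $a$, express $v_p(\det(A))$ through a quadratic form in suitable $\Z_p$-coordinates, and then apply Lemma~\ref{lem:dist}. Fix one of the three ramified cubic extensions $K_i|\Q_p$ with $\operatorname{Gal}(K_i|\Q_p) = \langle \sigma_i \rangle$ and write $\sigma = \sigma_i$ for brevity. For $a \in \mathcal O_i$ with $\operatorname{Tr}_{K_i|\Q_p}(a) = 0$, expanding the determinant of
\[ \begin{pmatrix} 1 & 1 & 1 \\ a & \sigma(a) & \sigma^2(a) \\ \sigma(a) & \sigma^2(a) & a \end{pmatrix} \]
along the first row, and using $a + \sigma(a) + \sigma^2(a) = 0$, should give a symmetric expression in the conjugates; I expect it to reduce to $-(a^2 + \sigma(a)^2 + \sigma^2(a)^2) + (a\sigma(a) + \sigma(a)\sigma^2(a) + \sigma^2(a)a)$ up to sign, which by the trace-zero condition is a multiple of $a^2 + \sigma(a)^2 + \sigma^2(a)^2 = -2(a\sigma(a)+\sigma(a)\sigma^2(a)+\sigma^2(a)a)$. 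In any case $\det(A)$ is, up to a unit constant, equal to $\operatorname{Tr}_{K_i|\Q_p}(a^2)$ (or a fixed rational multiple thereof), which is a quadratic form in $a$.

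Next I would coordinatize the trace-zero subspace. The set $\{a \in \mathcal O_i : \operatorname{Tr}_{K_i|\Q_p}(a) = 0\}$ is a rank-$2$ free $\Z_p$-module (it is the kernel of the surjective trace map $\mathcal O_i \to \Z_p$, which splits since $p \nmid 3 = [K_i:\Q_p]$ gives a unit on $\Z_p$). Picking a $\Z_p$-basis $\{e_1, e_2\}$ of this module identifies it with $\Z_p^2$, and $a \mapsto c\cdot\operatorname{Tr}_{K_i|\Q_p}(a^2)$ becomes a quadratic form $q_i \in \Z_p[X,Y]$ whose Gram matrix is $c\cdot(\operatorname{Tr}_{K_i|\Q_p}(e_je_k))_{j,k}$. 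The key arithmetic input is then to identify the equivalence class of $q_i$ over $\Z_p$: I claim that for each $i$, $q_i \sim \alpha_i (X^2 + 3Y^2)$ for some $\alpha_i \in \Z_p$ with $v_p(\alpha_i)$ independent of $i$ and equal to $1$, so that Lemma~\ref{lem:dist} yields $X_{q_i} = X_f + 1$ in distribution. Since the three ramified extensions are the twists of each other and $p \equiv 1 \bmod 3$, one can compute the discriminant and Hasse invariant of the trace form of $K_i|\Q_p$: the discriminant of a cubic field equals $f^2$ times a square (here the conductor contributes $p^2$), so $\det(q_i)$ has odd valuation $\equiv 1$, forcing $q_i$ to have a $p$-adic valuation-$1$ factor. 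The residual binary form of determinant a unit times $3$ over $\Z_p$ (with $p>3$) is then either the norm form of $\Q_p(\sqrt{-3}) = \Q_p$ (split, $p \equiv 1 \bmod 3$) — giving $X^2+3Y^2 \sim X^2 - Y^2$ — and in all three cases it is $\GL_2(\Z_p)$-equivalent to $X^2 + 3Y^2$ after scaling, because binary $\Z_p$-forms of the same determinant-class and Hasse invariant are equivalent for $p$ odd.

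The main obstacle I anticipate is the second step: showing that the scaled trace forms $q_1, q_2, q_3$ attached to the three distinct ramified cubic extensions all lie in the \emph{same} $\Z_p$-equivalence class (namely that of $p\cdot(X^2+3Y^2)$), rather than merely having the correct valuation of the determinant. One clean way around computing Gram matrices by hand is to use a uniformizer: if $\pi_i$ is a uniformizer of $\mathcal O_i$ chosen so that $\operatorname{Tr}(\pi_i) = 0$ (possible after adjusting by an element of $\Z_p$, since $\operatorname{Tr}(\mathcal O_i) = \Z_p$), then $\{\pi_i, \pi_i^2 - c\}$ with suitable $c \in \Z_p$ can be made a trace-zero basis, and $\operatorname{Tr}(\pi_i^2), \operatorname{Tr}(\pi_i^3), \operatorname{Tr}(\pi_i^4)$ are all explicitly computable from the Eisenstein minimal polynomial $\pi_i^3 - p u_i = 0$ (with $u_i \in \Z_p^\times$ running over representatives of $\Z_p^\times/(\Z_p^\times)^3$), making the Gram matrix entries visibly divisible by $p$ with the off-diagonal structure giving the $X^2 + 3Y^2$ shape uniformly in $i$ — the factor $3$ appearing exactly as in the split case because it comes from the discriminant of $X^3 - 1$. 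Once the Gram computation confirms $q_i \sim p(X^2+3Y^2)$ for every $i$, the conclusion is immediate from Lemma~\ref{lem:dist} since the distribution of $v_p(\det A)$ over $M_p^\mathrm{ram}$ is the common distribution over the three pieces $M_{p,i}^\mathrm{ram}$.
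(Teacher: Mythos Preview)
Your proposal is correct in outline and ultimately converges on the paper's approach, but the paper gets there more directly. The paper immediately fixes an Eisenstein generator $\alpha$ with $\alpha^3 = c$, $v_p(c)=1$, writes the trace-zero lattice as $\{a\alpha + b\alpha^2 : a,b \in \Z_p\}$, and computes $\det(A) = -9abc$ in one stroke (this matches your two steps combined, since $\det(A) = -\tfrac{3}{2}\operatorname{Tr}(\beta^2)$ and $\operatorname{Tr}(\beta^2)=6abc$). Then it observes $v_p(-9c)=1$ and shows $XY \sim X^2+3Y^2$ by the explicit substitution $(X,Y)\mapsto (X+sY,X-sY)$ with $s^2 \equiv -3 \bmod p$. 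So the paper never invokes Hasse invariants or the abstract classification of binary $\Z_p$-forms; the equivalence falls out of Hensel lifting $\sqrt{-3}$.

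Your second paragraph's invariant-theoretic argument is unnecessary and contains a slip: the Gram matrix of the trace form on the trace-zero lattice is $\begin{pmatrix}0&3c\\3c&0\end{pmatrix}$, so $\det(q_i)=-9c^2$ has $p$-adic valuation $2$, not odd valuation $1$ as you wrote. That does not break the conclusion (since $q_i = 6c\cdot XY$ visibly has the right shape), but it shows the discriminant reasoning alone does not pin down $v_p(\alpha_i)=1$; one still needs the explicit form, which is exactly what your third paragraph and the paper do. In short: drop the middle paragraph and go straight to the Eisenstein computation.
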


\begin{proof}
Since $M_p^\mathrm{ram}$ is the disjoint union of the $M_{p,i}^\mathrm{ram}$ it is sufficient to show it for each random variable $M_{p,i}^\mathrm{ram} \to \R_{\geq 0}, \, A \mapsto v_p(\det(A))$. Let $K = K_i$, $\mathcal O = \mathcal O_i$, $\sigma = \sigma_i$ and denote by $\zeta \in \mathcal O$ a third root of unity.
As $K|\Q_p$ is totally ramified we can find a primitive element $\alpha \in \mathcal O$ such that $\mathcal O = \Z_p[\alpha]$ and $\alpha$ has minimal polynomial $X^3 - c$ with $c \in \Z_p$ and $v_p(c) = 1$.
In particular we have a bijection
\[ \varphi \colon \Z_p^2 \longrightarrow M_{p,i}^\mathrm{ram} \cup \{ 0 \}, \; (a,b) \longmapsto \begin{pmatrix}  1 & 1 & 1 \\ \beta & \sigma(\beta) & \sigma^2(\beta) \\ \sigma(\beta) & \sigma^2(\beta) & \beta \end{pmatrix}, \; \beta = a\alpha + b \alpha^2. \]
We may assume that $\sigma(\alpha) = \zeta\alpha$. 
Let $A = \varphi(a,b)$.
An easy calculations shows that $\det(A) = \det(\varphi(a,b)) = -9abc$. As $p \nmid 9$ and $v_p(c) = 1$ we conclude that $M_{p,i}^\mathrm{ram} \to \R_{\geq 0},\; A \mapsto v_p(\det(A))$ is in distribution equal to $X_g + 1$, where $g = XY$.
As $p \equiv 1 \bmod 3$ we know that $-3$ is a quadratic residue modulo $p$, that is, there exists $s \in \Z$ such that $s^2 \equiv -3 \bmod p$. Consider the matrix
\[ B = \begin{pmatrix} 1 & 1 \\ s & -s \end{pmatrix}. \]
Now $\det(B) = -2s \in \Z_p^\times$ and therefore $g = XY \sim (X+sY)(X-sY) = X^2 - s^2 Y^2 = X^2 + 3Y^2$.
\end{proof}

We now turn to the inert case.
Unfortunately, in this case we only have the following conjecture, which basically states that the matrices used to compute the $p$-adic regulator, are uniformly distributed.

\begin{conjecture}\label{conj:inert}
The random variable $M_p^\mathrm{inert} \to \R_{\geq 0}, \; A \mapsto v_p(\det(A))$ is in distribution equal to $X_f + 2$, where $f = X^2 + 3Y^2$.
\end{conjecture}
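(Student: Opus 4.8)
The plan is to reduce the statement to a computation with binary quadratic forms over $\Z_p$, just as in the split and ramified cases, the only new ingredient being a structural fact about the trace form of the unramified cubic extension. Write $\mathcal O=\Z_p^{(3)}$ and $T=\{a\in\mathcal O\mid\operatorname{Tr}_{\Q_p^{(3)}|\Q_p}(a)=0\}$, a free $\Z_p$-module of rank $2$. For $A\in M_p^{\mathrm{inert}}$ attached to $a$, cofactor expansion along the first row gives $\det(A)=\sum_{0\le i<j\le2}\tau^i(a)\tau^j(a)-\operatorname{Tr}_{\Q_p^{(3)}|\Q_p}(a^2)$, and since $a+\tau(a)+\tau^2(a)=0$ we have $\operatorname{Tr}_{\Q_p^{(3)}|\Q_p}(a^2)=-2\sum_{0\le i<j\le2}\tau^i(a)\tau^j(a)$, whence
\[ \det(A)=-\tfrac32\,\operatorname{Tr}_{\Q_p^{(3)}|\Q_p}(a^2). \]
As $p>3$ the constant $-\tfrac32$ is a unit, so $v_p(\det A)=v_p(\operatorname{Tr}_{\Q_p^{(3)}|\Q_p}(a^2))$: the random variable in question is the valuation of the quadratic form $a\mapsto\operatorname{Tr}(a^2)$ on the $\Z_p$-lattice over which $a$ varies.

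Identifying that lattice is where the shift by $2$ comes from. In the realisations used to compute the $p$-adic regulator one has $a=\log_p(\iota(\varepsilon_p))$ with $\iota\colon K\hookrightarrow K_{\mathfrak p}\cong\Q_p^{(3)}$; since $\iota(\varepsilon_p)$ is a unit, $\log_p$ annihilates its Teichm\"uller component and maps $1+p\mathcal O$ isomorphically onto $p\mathcal O$, so $a\in p\mathcal O$, while $\operatorname{Tr}(a)=\log_p(\mathrm N_{K_{\mathfrak p}|\Q_p}(\iota(\varepsilon_p)))=\log_p(\pm1)=0$; hence $a$ ranges over $pT$. Writing $a=pb$ with $b$ uniform in $T\cong\Z_p^2$, the random variable becomes $b\mapsto v_p(p^2q_T(b))$, where $q_T\in\Z_p[X,Y]$ is the form $b\mapsto\operatorname{Tr}(b^2)$ on $T$, and by Lemma~\ref{lem:dist} (applied to $p^2q_T\sim p^2(X^2+3Y^2)$, so $\alpha=p^2$) it then suffices to show $q_T\sim X^2+3Y^2$ over $\operatorname{GL}_2(\Z_p)$.

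To pin down $q_T$, I would use that the bilinear form $(x,y)\mapsto\operatorname{Tr}(xy)$ on $\mathcal O$ is unimodular (since $\Q_p^{(3)}|\Q_p$ is unramified, its discriminant is a unit) and that $\operatorname{Tr}(1)=3\in\Z_p^\times$ gives an orthogonal splitting $\mathcal O=\Z_p\cdot1\perp T$ whose rank-one part is $\langle3\rangle$; hence $q_T$ is unimodular with Gram determinant $\operatorname{disc}(\Q_p^{(3)}|\Q_p)/3$. Because $\operatorname{Gal}(\Q_p^{(3)}|\Q_p)$ is cyclic of order $3$, hence contained in $A_3$, the discriminant $\operatorname{disc}(\Q_p^{(3)}|\Q_p)$ is a square in $\Z_p^\times$, so the Gram determinant of $q_T$ is $3$ modulo $(\Z_p^\times)^2$. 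Since $p$ is odd, a unimodular binary quadratic form over $\Z_p$ is determined up to $\operatorname{GL}_2(\Z_p)$-equivalence by its Gram determinant modulo squares, and $X^2+3Y^2$ has Gram determinant $3$; therefore $q_T\sim X^2+3Y^2$, which completes the plan.

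The step I expect to cause the most trouble is not any single lemma but getting the normalisation exactly right: unlike the ramified case, where a uniformiser $\alpha$ with $\alpha^3\in p\Z_p^\times$ makes the whole computation explicit and visibly produces the shift, in the unramified case there is no uniformiser, and the entire power $p^2$ has to be traced to $\log_p(\mathcal O^\times)\subseteq p\mathcal O$ — equivalently, to the fact that the model should be built on the lattice $pT$ rather than on $T$. Once this is reconciled with the intended finite measure on $M_p^{\mathrm{inert}}$ and with the numerical data, the argument above turns the conjecture into a lemma on the same footing as the split and ramified cases.
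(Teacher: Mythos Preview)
The paper does \emph{not} prove this statement: it is explicitly labelled a conjecture, and the remark following it says the authors ``could not prove this conjecture'' and relied on numerical testing. So there is no proof in the paper to compare against; your proposal is an attempt to settle something the authors left open.

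Your computation $\det(A)=-\tfrac32\operatorname{Tr}_{\Q_p^{(3)}|\Q_p}(a^2)$ is correct, and your identification of the trace form $q_T$ on $T=\{a\in\Z_p^{(3)}:\operatorname{Tr}(a)=0\}$ as $\GL_2(\Z_p)$-equivalent to $X^2+3Y^2$ is also correct (unimodularity of the trace pairing on $\Z_p^{(3)}$, the orthogonal splitting off $\Z_p\cdot 1$, and the square discriminant via $C_3\subseteq A_3$ all go through). But these two correct facts combine to show that the random variable on the paper's $M_p^{\mathrm{inert}}$, where $a$ ranges over $T$, is in distribution equal to $X_f$, \emph{not} $X_f+2$: since $q_T$ is unimodular, $v_p(\det A)=v_p(q_T(a))$ is $0$ with probability $1-1/p$ or $1-1/p^2$ depending on the residue of $p$, which flatly contradicts the conjecture as written.

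Your argument for the shift by $2$ is where the gap lies. You obtain it by noting that for matrices arising from actual regulators one has $a=\log_p(\varepsilon)\in p\Z_p^{(3)}$, hence $a\in pT$, and then you compute on $pT$ instead of $T$. But Conjecture~\ref{conj:inert} is a statement about the random variable on $M_p^{\mathrm{inert}}$ with its canonical measure, and the paper's definition of $M_p^{\mathrm{inert}}$ uses $a\in\Z_p^{(3)}$, not $a\in p\Z_p^{(3)}$. Appealing to where regulator matrices sit is conflating Conjecture~\ref{conj:inert} with Conjecture~\ref{conj1}; it changes the set over which you integrate rather than proving the stated claim. You flag this normalisation issue yourself in your last paragraph, but it is not a detail to be reconciled --- it is the whole shift.

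The upshot is that your argument does not prove the conjecture as stated; rather, it shows the stated version is false and that the definition of $M_p^{\mathrm{inert}}$ is inconsistent with that of $M_p^{\mathrm{split}}$ (where the condition $a,b\in p\Z_p$ is built in). If one amends the definition to require $a\in p\Z_p^{(3)}$, your argument becomes a complete proof of the corrected statement, and the paper's Conjecture~\ref{conj:inert} would then be a lemma on the same footing as the split and ramified cases.
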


\begin{rem}
While we could not prove this conjecture, we have numerically tested it for a large number of primes $p$.
Moreover in Section~5 we have collected numerical evidence for Conjecture~\ref{conj}, which is a consequence of Conjecture~\ref{conj:inert}, therefore also justifying Conjecture~\ref{conj:inert}.
\end{rem}

Let $M = M_p^{x}$ for some $x \in \{\mathrm{split}, \mathrm{ram}, \mathrm{inert}\}$.
We now want to compute the probability that the valuation of the determinant of a randomly chosen matrix in $M$ has given value.
Since we have seen that $M \to \R_{\geq 0}, \; A \mapsto v_p(\det(A))$ is in distribution equal to $X_f + 1$ or $X_f + 2$ respectively, where $f = X^2 + 3Y^2$, it is sufficient to determine the probability $\pr(X_f = i)$, $i \in \Z_{\geq 0}$.
Now $\pr(X_f = i) = \mu(M_i)$, where
\[ M_i = \{ (a,b) \in \Z_p \times \Z_p \mid v_p(f(a,b)) = i \} \]
and $\mu$ is the Haar measure on $\Z_p \times \Z_p$ with $\mu(\Z_p \times \Z_p) = 1$. Note that since our form $f$ has integer coefficients, for any commutative ring we have a unique evaluation map $R \times R \to R$, $(a,b) \mapsto f(a,b)$ compatible with the unique morphism $\Z \to R$.

\begin{lem}\label{lem:2}
For $i \geq 0$ we have
\[ \pr(X_f = i ) = \lim_{k\to \infty} \frac{1}{p^{2k}} \#\{(\bar x, \bar y) \in (\Z/p^k\Z)^2 \mid v_p(f(\bar x,\bar y)) = i \}. \]
\end{lem}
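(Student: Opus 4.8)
The plan is to observe that, although $M_i = \{(a,b) \in \Z_p \times \Z_p \mid v_p(f(a,b)) = i\}$ is defined through the full $p$-adic valuation, the condition $v_p(f(a,b)) = i$ in fact only constrains the residues of $a$ and $b$ modulo $p^{i+1}$. Since $f$ has integer coefficients, the evaluation map is compatible with reduction, so for every $k$ the reduction homomorphism $\pi_k \colon \Z_p^2 \to (\Z/p^k\Z)^2$ satisfies $\pi_k(f(a,b)) = f(\pi_k(a,b))$. Writing a nonzero value $f(a,b) = p^m w$ with $w \in \Z_p^\times$, one checks at once that $v_p(f(a,b)) = i$ holds if and only if $f(a,b) \bmod p^{i+1}$ is a nonzero element of $p^i\Z/p^{i+1}\Z$ (the case $f(a,b) = 0$ being excluded on both sides once one declares the zero class of $\Z/p^k\Z$ to have valuation larger than every $i < k$). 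Equivalently, $v_p(f(a,b)) = i$ if and only if $v_p(f(\bar x, \bar y)) = i$ for $(\bar x,\bar y) = \pi_{i+1}(a,b)$. Hence $M_i = \pi_{i+1}^{-1}(T)$ with $T = \{(\bar x, \bar y) \in (\Z/p^{i+1}\Z)^2 \mid v_p(f(\bar x, \bar y)) = i\}$; in other words $M_i$ is a finite disjoint union of cosets of $(p^{i+1}\Z_p)^2$.

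Since $\pi_{i+1}$ is a surjective homomorphism of profinite abelian groups and $\mu$ is the normalized Haar measure on $\Z_p^2$, the preimage of each point of $(\Z/p^{i+1}\Z)^2$ has measure $p^{-2(i+1)}$, so
\[ \pr(X_f = i) = \mu(M_i) = \mu(\pi_{i+1}^{-1}(T)) = \frac{\#T}{p^{2(i+1)}}. \]
The same compatibility of $f$ with reduction shows that for any $k \geq i+1$ the reduction map $(\Z/p^k\Z)^2 \to (\Z/p^{i+1}\Z)^2$ carries $\{(\bar x, \bar y) \in (\Z/p^k\Z)^2 \mid v_p(f(\bar x, \bar y)) = i\}$ onto $T$, with every fibre of cardinality $p^{2(k-i-1)}$. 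Therefore $\tfrac{1}{p^{2k}}\#\{(\bar x, \bar y) \in (\Z/p^k\Z)^2 \mid v_p(f(\bar x, \bar y)) = i\} = \#T/p^{2(i+1)} = \pr(X_f = i)$ for all $k \geq i+1$, so the sequence in the statement is eventually constant and its limit equals $\pr(X_f = i)$, as claimed.

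Essentially every step is routine bookkeeping with $p$-adic valuations and the profinite structure of $\Z_p^2$, so I do not expect a genuine obstacle. The only points requiring a little care are fixing the convention for the valuation of the zero residue class and checking the precise assertion that the condition $v_p(f(a,b)) = i$ factors through reduction modulo $p^{i+1}$ (and not merely through some unspecified higher power) --- this is exactly what forces the approximating sequence to stabilize rather than merely converge.
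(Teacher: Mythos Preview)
Your argument is correct and rests on the same core observation as the paper's proof: the condition $v_p(f(a,b)) = i$ depends only on $(a,b)$ modulo $p^{i+1}$, so $M_i$ is a finite union of cosets of $(p^{i+1}\Z_p)^2$. The paper phrases this a bit differently, invoking the general fact that $\mu(M_i) = \lim_{k\to\infty}\mu_k(\pi_k(M_i))$ for the projective system of normalised counting measures on $(\Z/p^k\Z)^2$, and then identifying $\pi_k(M_i)$ with the set in the statement for $k$ large. You instead compute both sides directly as $\#T/p^{2(i+1)}$, avoiding the projective-limit measure formula altogether and showing explicitly that the approximating sequence stabilises at $k=i+1$. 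Your route is slightly more elementary and yields the sharper conclusion that the limit is in fact eventually constant; the paper's route is shorter but leans on a general measure-theoretic fact.
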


\begin{proof}
For $k \geq 1$ let us denote by $\pi_k \colon \Z_p^2 \to (\Z/p^k\Z)^2$ the canonical projection and by $\mu_k$ the normalised counting measure on $(\Z/p^k\Z)^2$.
As $(\Z_p)^2$ is the projective limit of the measure spaces $((\Z/p^k\Z)^2)_{k \geq 1}$, we have
\[ \mu(M_i) = \lim_{k \to \infty} \mu_k(\pi_k(M_i)). \]
Now $\pi_k(M_i) = \{(\bar x, \bar y) \in (\Z/p^k\Z)^2 \mid v_p(f(x,y)) = i \}$ and for $k$ large enough we have $v_p(f(\bar x, \bar y)) = i$ if and only if $v_p(f(x,y)) = i$.
\end{proof}

\begin{lem}
For $k \geq i$ we have
\[ \# \{ (x,y) \in (\Z/p^k \Z)^2 \mid x^2 + 3y^2 \equiv 0 \bmod p^i \} \\ = p^{2(k-i)}  \# \{ (x,y) \in (\Z/p^i \Z)^2 \mid x^2 + 3y^2 = 0 \}. \]
\end{lem}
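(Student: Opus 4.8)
The plan is to reduce the count modulo $p^k$ to a count modulo $p^i$ by a fibering argument. First I would fix the natural reduction map $\rho \colon (\Z/p^k\Z)^2 \to (\Z/p^i\Z)^2$; since $k \geq i$ this is well defined and surjective, with every fibre of size exactly $(p^{k-i})^2 = p^{2(k-i)}$. The key observation is that for a pair $(x,y) \in (\Z/p^k\Z)^2$, the condition $x^2 + 3y^2 \equiv 0 \bmod p^i$ depends only on $\rho(x,y)$: indeed the quadratic form $X^2 + 3Y^2$ has integer coefficients, so there is a well-defined evaluation map on any quotient ring, and it is compatible with the reduction $\Z/p^k\Z \to \Z/p^i\Z$. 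Hence the set $\{(x,y) \in (\Z/p^k\Z)^2 \mid x^2 + 3y^2 \equiv 0 \bmod p^i\}$ is precisely the preimage under $\rho$ of $\{(\bar x, \bar y) \in (\Z/p^i\Z)^2 \mid \bar x^2 + 3 \bar y^2 = 0\}$.

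The counting step is then immediate: the preimage of a set of cardinality $N$ under a map all of whose fibres have size $p^{2(k-i)}$ has cardinality $p^{2(k-i)} N$. Applying this with $N = \#\{(x,y) \in (\Z/p^i\Z)^2 \mid x^2 + 3y^2 = 0\}$ yields the claimed formula.

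I do not expect any genuine obstacle here; the only point that needs a word of care is the compatibility of the evaluation of $X^2+3Y^2$ with ring reduction, which is exactly the remark made just before Lemma~\ref{lem:2} in the excerpt (the form has integer coefficients, so it gives a natural transformation of functors on commutative rings), and so it may simply be cited. If one wants to be fully explicit, one checks directly that if $x \equiv x' \bmod p^i$ and $y \equiv y' \bmod p^i$ then $x^2 + 3y^2 \equiv x'^2 + 3y'^2 \bmod p^i$, which is a one-line congruence computation.
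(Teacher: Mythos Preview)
Your argument is correct and is exactly the natural fibering argument one would give; the paper in fact states this lemma without proof, treating it as immediate, and your write-up supplies precisely the justification (surjectivity of the reduction map $(\Z/p^k\Z)^2 \to (\Z/p^i\Z)^2$ with constant fibre size $p^{2(k-i)}$, together with the fact that the congruence condition modulo $p^i$ depends only on the image). Nothing further is needed.
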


For a commutative ring $R$ let us denote by $X(R)$ the set $\{ (x,y) \in R^2 \mid x^2 + 3y^2 = 0 \}$. Our aim is to compute the right hand side in Lemma~\ref{lem:2} by calculating $\#X(R)$ for the residue rings $R = \Z/p^k\Z$. By the properties of $f$, this investigation naturally splits into two cases.

\subsection{The case $p \equiv 1 \bmod 3$}

\begin{lem}\label{lem:3}
Let $i \geq 0$ and $p \equiv 1 \bmod 3$. Then the following hold:
\begin{enumerate}[(i)]
\item
  We have $\#X(\Z/p^i\Z) = p^{i-1}((p-1)i + p)$.
\item
  We have 
  \[ \mu(M_i) = \frac{i+1}{p^i}\left( 1 - \frac 1 p \right)^2.\]
\end{enumerate}
\end{lem}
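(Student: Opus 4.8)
The plan is to prove (i) by a direct lifting argument over $\Z/p^i\Z$ and then deduce (ii) from Lemma~\ref{lem:2} and the preceding counting lemma. Since $p \equiv 1 \bmod 3$ we have $-3 \in (\F_p^\times)^2$ (this was already used above), so over $\F_p$ the form $x^2 + 3y^2$ factors as $(x+sy)(x-sy)$ with $s^2 \equiv -3 \bmod p$ and $s \not\equiv -s$; hence $X(\F_p)$ consists of the two lines $x = \pm sy$, giving $\#X(\F_p) = 2p - 1$. More usefully, the change of variables $(x,y) \mapsto (u,v) = (x+sy, x-sy)$ is invertible over $\Z_p$ (determinant $-2s \in \Z_p^\times$), so $x^2 + 3y^2 \sim uv$ in the sense of Lemma~\ref{lem:dist}, and it suffices to count $\#\{(u,v) \in (\Z/p^i\Z)^2 \mid uv \equiv 0 \bmod p^i\}$.

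First I would compute $N_i := \#\{(u,v) \in (\Z/p^i\Z)^2 : uv \equiv 0 \bmod p^i\}$ by stratifying according to $v_p(u) = j$ for $j = 0, 1, \dots, i$. If $v_p(u) = j$ with $0 \le j \le i-1$, there are $p^{i-j} - p^{i-j-1} = p^{i-j-1}(p-1)$ choices of $u$ in $\Z/p^i\Z$, and then we need $v_p(v) \ge i - j$, which allows $p^{j}$ choices of $v$; if $v_p(u) \ge i$ (i.e. $u \equiv 0$), there is $1$ choice of $u$ and $p^i$ choices of $v$. Summing,
\[
N_i = p^i + \sum_{j=0}^{i-1} p^{i-j-1}(p-1)\, p^{j} = p^i + i\,p^{i-1}(p-1) = p^{i-1}\big((p-1)i + p\big),
\]
which is exactly the claimed value of $\#X(\Z/p^i\Z)$; the equivalence $x^2+3y^2 \sim uv$ guarantees the count is unchanged, so (i) follows.

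For (ii), by Lemma~\ref{lem:2} and the preceding lemma (with $k \ge i$), $\mu(M_i) = \mu(\{v_p(f) \ge i\}) - \mu(\{v_p(f) \ge i+1\})$, and $\mu(\{v_p(f) \ge i\}) = \lim_{k\to\infty} p^{-2k} \cdot p^{2(k-i)} \#X(\Z/p^i\Z) = p^{-2i}\#X(\Z/p^i\Z)$. Plugging in (i),
\[
\mu(M_i) = \frac{p^{i-1}((p-1)i + p)}{p^{2i}} - \frac{p^{i}((p-1)(i+1) + p)}{p^{2(i+1)}},
\]
and a routine simplification collapses this to $\dfrac{i+1}{p^i}\Big(1 - \dfrac1p\Big)^2$. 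The only mild subtlety — the ``main obstacle,'' such as it is — is bookkeeping: making sure the stratification in the computation of $N_i$ handles the boundary term $u \equiv 0$ correctly (equivalently, that the identity $x^2 + 3y^2 \equiv 0 \bmod p^i$ really is controlled by $v_p$ of the two linear factors, which needs $p \ne 2$ and $p \nmid 3$, both guaranteed by $p > 3$), and then carrying the telescoping algebra in (ii) without sign errors.
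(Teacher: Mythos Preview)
Your proof is correct and follows essentially the same route as the paper: reduce $x^2+3y^2$ to $uv$ via the linear change of variables using a square root of $-3$, count solutions of $uv\equiv 0\bmod p^i$ by stratifying on $v_p(u)$, and then telescope to get $\mu(M_i)$. Your stratification (over $j=0,\dots,i-1$ plus the term $u\equiv 0$) is in fact a bit tidier than the paper's decomposition into $U$ and the sets $E_l\times E_{\ge i-l}$; the only thing to make explicit is that $s$ must be taken as a genuine square root of $-3$ in $\Z_p$ (or in $\Z/p^i\Z$), which follows from Hensel's lemma, so that $x^2+3y^2=(x+sy)(x-sy)$ holds exactly and not merely modulo $p$.
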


\begin{proof}
(i): As $p$ is congruent $1$ modulo $3$, the Legendre symbol $(\frac{-3} p )$ evaluates to $1$, that is, $-3$ is a quadratic residue modulo $p$.
The prime $p$ being odd, we can find therefore $\alpha \in \Z/p^i \Z$ such that $\alpha^2 = -3$ in $\Z/p^i \Z$.
In particular our binary quadratic form $f$ factors as $X^2 + 3Y^2 = (X+\alpha Y)(X-\alpha Y)$.
As
\begin{align*} \{ (x,y) \in (\Z/p^i\Z)^2 \mid x^2 + 3y^2 = 0 \} & \longrightarrow \{ (u,v) \in (\Z/p^i\Z)^2 \mid u \cdot v = 0\},\\  (x,y) &\longmapsto (x+\alpha y, x - \alpha y), \end{align*}
is a bijection, it is sufficient to count the cardinality of the set on the right hand side.
We have
\[ \{ (u,v) \in (\Z/p^i\Z)^2 \mid u \cdot v = 0\} =
U \mathbin{\dot\cup} \mathop{\dot\bigcup}\limits_{l=1}^{i-1} (E_l \times E_{\geq i -l}) , \]
where $E_l$ resp. $E_{\geq i - l}$ denotes the set of elements with valuation equal to $l$ resp. greater or equal to $i-l$ and $U$ is the set $\{ (u,v) \in (\Z/p^i\Z)^2 \mid u = 0 \text{ or } v = 0\}$.
As $\# E_{k} = (p-1)p^{i-k-1}$ we obtain
\begin{align*}
\#\{ (u,v) \in (\Z/p^i\Z)^2 \mid u \cdot v = 0\} & = \# U + \sum_{l=1}^{i-1} \# E_l \sum_{j=i-l}^{i-1} \# E_j \\
& = 2p^i - 1 + \sum_{l=1}^{i-1} (p-1) p^{i-l-1} \sum_{j=i-l}^{i-1} (p-1)p^{i-j-1}.
\end{align*}
Now
\[ \sum_{j=i-l}^{i-1} p^{i-j-1} = \sum_{j=0}^{l-1} p^{l-j-1} = p^{l-1} \sum_{j=0}^{l-1} \left(\frac 1 p \right)^j = \frac{p^l-1}{p-1}, \]
and
\[ (p-1)\sum_{l=1}^{i-1} p^{i-l-1} (p^l - 1) = (p-1)p^{i-1}\left(i - \sum_{l=0}^{i-1} \left( \frac 1 p \right)^l \right) = (p-1)p^{i-1} i - p^i + 1.\]
Thus
\[ \#X(\Z/p^i\Z) = 2p^i - 1 + (p-1)p^{i-1}i - p^i + 1 = p^{i-1}((p-1)i + p). \]
(ii): First note that for $k \geq i$ we have
\begin{align*} & \phantom{ = }\,\,\, \{ (\bar x, \bar y) \in (\Z/p^k \Z)^2 \mid v_p(f(\bar x , \bar y)) = i\} \\ & =  \{ (\bar x, \bar y) \in (\Z/p^k \Z)^2 \mid f(\bar x, \bar y) \equiv 0 \bmod p^i \text{ and } f(\bar x, \bar y) \not\equiv 0 \bmod p^{i+1} \} \\ 
& = \{ (\bar x, \bar y ) \in (\Z/p^k \Z)^2 \mid f(\bar x ,\bar y) \equiv 0 \bmod p^i \} \setminus \{ (\bar x, \bar y ) \in (\Z/p^k \Z)^2 \mid f(\bar x ,\bar y) \not\equiv 0 \bmod p^{i+1} \} \\
& = p^{2(k-i)} \cdot \# X(\Z/p^i \Z) - p^{2(k-i+1)} \cdot \# X(\Z/p^{i+1}\Z).
\end{align*}
Thus we conclude that $\mu(M_i)$ is equal to
\[ \lim_{k \to \infty} \frac{1}{p^{2k}} \left( p^{2k-i-1}((p-1)i+p) - p^{2k -(i+1) -1}((p-i)(i+1)+p)\right) = \frac{i+1}{p^i}\left(1- \frac 1 p \right)^2. \]
\end{proof}

\subsection{The case $p \equiv 2 \bmod 3$}

\begin{lem}
Let $i \geq 0$ and $p \equiv 2 \bmod 3$.
Then the following hold:
\begin{enumerate}[(i)]
\item
We have $\# X(\Z/p^i\Z) = p^{2i - 2\lceil \frac i 2 \rceil}$.
\item
We have
\[ \mu(M_i) = \begin{cases} 0, &\text{ if $i$ is odd},\\
\frac 1 {p^i} \left( 1- \frac  1 {p^2} \right), &\text{ if $i$ is even.}\end{cases} \]
\end{enumerate}
\end{lem}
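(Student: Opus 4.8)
The plan is to exploit that for $p \equiv 2 \bmod 3$ the form $f = X^2 + 3Y^2$ is anisotropic modulo $p$: in this case $\left(\frac{-3}{p}\right) = -1$, so a congruence $x^2 + 3y^2 \equiv 0 \bmod p$ forces $p \mid x$ and $p \mid y$. The first step is to promote this to a valuation formula over $\Z_p$, namely
\[ v_p(x^2 + 3y^2) = 2 \min(v_p(x), v_p(y)) \qquad \text{for } (x,y) \in \Z_p^2 \setminus \{(0,0)\}. \]
When $v_p(x) \neq v_p(y)$ this is clear, since $x^2$ and $3y^2$ then have distinct even valuations (using $p \neq 3$); when $v_p(x) = v_p(y) = a$, writing $x = p^a u$, $y = p^a w$ with $u, w \in \Z_p^\times$ reduces the claim to $v_p(u^2 + 3w^2) = 0$, which is precisely the anisotropy statement after dividing by the unit $w^2$.

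For part (i), I would read off from this that, passing to lifts in $\Z_p$, the equation $x^2 + 3y^2 = 0$ holds in $\Z/p^i\Z$ exactly when $v_p(x^2 + 3y^2) \geq i$, i.e. $\min(v_p(x), v_p(y)) \geq \lceil i/2 \rceil$, i.e. $p^{\lceil i/2 \rceil}$ divides both $x$ and $y$ modulo $p^i$ (here $\lceil i/2 \rceil \leq i$ for $i \geq 0$, so the condition is meaningful; the degenerate case $i = 0$ is the trivial ring and is consistent). Since there are exactly $p^{i - \lceil i/2 \rceil}$ residues modulo $p^i$ divisible by $p^{\lceil i/2 \rceil}$, this gives $\#X(\Z/p^i\Z) = p^{2(i - \lceil i/2 \rceil)}$.

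For part (ii), I would mirror the bookkeeping in the proof of Lemma~\ref{lem:3}(ii): combining Lemma~\ref{lem:2} with the counting lemma above yields, for $k$ sufficiently large,
\[ \#\{(\bar x, \bar y) \in (\Z/p^k\Z)^2 \mid v_p(f(\bar x, \bar y)) = i\} = p^{2(k-i)}\,\#X(\Z/p^i\Z) - p^{2(k-i-1)}\,\#X(\Z/p^{i+1}\Z), \]
so that $\mu(M_i) = p^{-2i}\#X(\Z/p^i\Z) - p^{-2(i+1)}\#X(\Z/p^{i+1}\Z) = p^{-2\lceil i/2 \rceil} - p^{-2\lceil (i+1)/2 \rceil}$ by part (i). Splitting on the parity of $i$: for $i = 2m$ this equals $p^{-2m} - p^{-2m-2} = p^{-i}(1 - p^{-2})$, and for $i = 2m+1$ it equals $p^{-2m-2} - p^{-2m-2} = 0$.

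Everything here is routine once the displayed valuation formula is in hand; the one genuine input is the anisotropy of $X^2 + 3Y^2$ modulo $p$, which is exactly where the hypothesis $p \equiv 2 \bmod 3$ enters, and the only thing requiring any care is the ceiling-function bookkeeping (and the parity split) in the last step together with the harmless degenerate case $\Z/p^0\Z = 0$.
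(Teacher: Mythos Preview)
Your proof is correct and follows essentially the same approach as the paper: both rest on the anisotropy of $X^2+3Y^2$ modulo $p$ (equivalently, $\left(\frac{-3}{p}\right)=-1$) to force both coordinates into $p^{\lceil i/2\rceil}\Z/p^i\Z$, and part~(ii) is handled by the identical subtraction and parity split. Your packaging via the single valuation identity $v_p(x^2+3y^2)=2\min(v_p(x),v_p(y))$ on $\Z_p^2$ is a tidy reformulation of what the paper argues by a direct contradiction inside $\Z/p^i\Z$, but the content is the same.
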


\begin{proof}
First note that by assumption, $-3$ is not a square modulo $p^i$ for all $i \geq 1$.
(i): Assume that $(x,y) \in X(\Z/p^i \Z)$. If $y$ is a unit, then $-3 = (x/y)^2$ is a square modulo $p^i$, a contradiction.
Now assume $2 v_p(y) < j$, that is, $j < \lceil \frac i 2 \rceil$.
We write $x = \varepsilon \cdot p^j$, $y = \varepsilon' \cdot p^{j'}$ with units $\varepsilon, \varepsilon'$.
Denoting by $s$ the minimum $\min(2j, 2j')$ we obtain $i > s$ and moreover
\[ \varepsilon^2 p^{2j - s} \equiv -3 \varepsilon'^2 p^{2j' - s} \bmod p^{i-s}. \]
Since $s = 2j$ or $s = 2j'$ this implies that $-3$ is a square modulo $p^{i-s}$, a contradiction.
This shows that all elements $(x,y) \in X(\Z/p^i\Z)$ have to satisfy $y^2 = 0$, $x^2 = 0$ and we necessarily have $X(\Z/p^i\Z) = \# \{ x \in \Z/p^i \Z \mid x^2 = 0 \}^2 $.
As \[ \{ x \in \Z/p^i \Z \mid x^2 = 0 \} =  \{ 0 \} \mathbin{\dot\cup} \mathop{\dot\bigcup}\limits_{l = \lceil \frac i 2 \rceil}^{i-1} E_l  \]
we obtain
\[ \#\{ x \in \Z/p^i \Z \mid x^2 = 0 \} = 1 + \sum_{l = \lceil \frac i 2 \rceil}^{i-1} \# E_l = 1 + p^{i - \lceil \frac i 2 \rceil} - 1 = p^{i - \lceil \frac i 2 \rceil}. \]
(ii): As in the proof of Lemma~\ref{lem:3} we obtain
\[ \mu(M_i) = \lim_{k \to \infty} \frac{1}{p^{2k}}\left( p^{2k -2i} p^{2i - 2\lceil \frac i 2 \rceil } - p^{2k - 2i - 2} p^{2i + 2 - 2\lceil \frac{i+1} 2 \rceil}\right) = p^{-2 \lceil \frac i 2 \rceil} - p^{-2 \lceil \frac{i+1} 2 \rceil}. \]
\end{proof}

We can now combine these results:

\begin{thm}
  Assume that Conjectures~\ref{conj1} and~\ref{conj:inert} hold. Then for all primes $p > 3$ the following hold:
  \begin{enumerate}[(i)]
  \item
  If $p \equiv 2 \bmod 3$, then $v_p(R_p(K)) \in 2 \Z$ for all $K \in \mathcal K$ and for $i \geq 0$ we have
  \[ \lim_{D \to \infty} \frac{\#\{ K \in \mathcal K(D) \mid v_p(R_p(K)) = 2i \}}{\# \mathcal K(D)} = \frac 1 {p^{2i - 2}} \left( 1- \frac  1 {p^2} \right). \]
  \item
  If $p \equiv 1 \bmod 3$, then for $i \geq 0$ we have
  \[ \lim_{D \to \infty} \frac{\#\{ K \in \mathcal K_p^{\mathrm{un}}(D) \mid v_p(R_p(K)) = i \}}{\#\mathcal K_p^{\mathrm{un}}(D)} = \frac{i-1}{p^{i-2}}\left( 1 - \frac 1 p \right)^2. \]
  and
  \[ \lim_{D \to \infty} \frac{\#\{ K \in \mathcal K_p^{\mathrm{ram}}(D) \mid v_p(R_p(K)) = i\}}{\# \mathcal K_p^{\mathrm{ram}}(D)} =  \frac{i}{p^{i-1}}\left( 1 - \frac 1 p \right)^2.  \]
  \end{enumerate}
\end{thm}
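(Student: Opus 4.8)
The plan is to assemble the theorem from the pieces already in place: the propositions writing $R_p(K)=\det(A)$ for some $A\in M_p^x$, Conjecture~\ref{conj1} identifying the limiting distribution of $v_p(R_p(K))$ over $\mathcal K_p^x$ with that of $v_p(\det(A))$, the lemmas (together with Conjecture~\ref{conj:inert}) showing that this last random variable equals $X_f+c_x$ in distribution, where $f=X^2+3Y^2$, $c_{\mathrm{split}}=c_{\mathrm{inert}}=2$ and $c_{\mathrm{ram}}=1$, and finally Lemma~\ref{lem:3}(ii) and its $p\equiv 2\bmod 3$ analogue, which evaluate $\pr(X_f=j)=\mu(M_j)$. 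So most of the proof is bookkeeping, and the only genuinely new ingredient is a gluing step, needed because Conjecture~\ref{conj1} is stated for $\mathcal K_p^{\mathrm{split}}$, $\mathcal K_p^{\mathrm{ram}}$, $\mathcal K_p^{\mathrm{inert}}$ separately, whereas the theorem concerns $\mathcal K_p^{\mathrm{un}}=\mathcal K_p^{\mathrm{split}}\mathbin{\dot\cup}\mathcal K_p^{\mathrm{inert}}$ (and, when $p\equiv 2\bmod 3$, all of $\mathcal K=\mathcal K_p^{\mathrm{un}}$, since then there are no ramified fields).

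First I would record the elementary fact that if a counting family is a disjoint union of two subfamilies whose relative frequency of the event $\{v_p(R_p(K))=i\}$ each tends to the same limit $\nu(i)$, then so does the relative frequency for the union: the latter is a convex combination of the former two with weights the relative sizes of the subfamilies, and a convex combination of sequences all converging to $\nu(i)$ converges to $\nu(i)$. With this in hand the limit formulas follow directly. For $p\equiv 1\bmod 3$, the ramified case gives $\pr(X_f=i-1)=\frac{i}{p^{i-1}}(1-1/p)^2$ by Lemma~\ref{lem:3}(ii), while for the unramified case both $M_p^{\mathrm{split}}$ and $M_p^{\mathrm{inert}}$ contribute the distribution $X_f+2$, so the gluing step and Lemma~\ref{lem:3}(ii) give $\pr(X_f=i-2)=\frac{i-1}{p^{i-2}}(1-1/p)^2$. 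For $p\equiv 2\bmod 3$ we have $\mathcal K=\mathcal K_p^{\mathrm{split}}\mathbin{\dot\cup}\mathcal K_p^{\mathrm{inert}}$, both pieces again of distribution $X_f+2$, so the limit is $\pr(X_f=i-2)$, which vanishes for $i$ odd and equals $\frac1{p^{2i-2}}(1-1/p^2)$ at $i=2j$. (Below the lower bound of the opening lemma on $v_p(R_p(K))$ the numerator on the left vanishes, so the stated formulas are to be read on the support of the limiting distribution.)

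It then remains to prove the pointwise assertion, for $p\equiv 2\bmod 3$, that $v_p(R_p(K))\in 2\Z$ for \emph{every} $K\in\mathcal K$, not merely with limiting frequency one; this is the step I expect to take the most care. Here I would return to the explicit matrices of the relevant propositions. If $p$ splits, then $R_p(K)=\pm 3p^2(a^2+ab+b^2)$ for suitable $a,b\in\Z_p$, not both zero since $R_p(K)\neq 0$ (Leopoldt's conjecture being a theorem for abelian fields, by Brumer), and $a^2+ab+b^2=N_{\Q_p(\zeta_3)|\Q_p}(a-\zeta_3 b)$; as $p\equiv 2\bmod 3$, the extension $\Q_p(\zeta_3)|\Q_p$ is the unramified quadratic one, whose norms have even valuation, so $v_p(R_p(K))=2+(\text{even})$ is even. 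If $p$ is inert, set $\ell=\log_p(\varepsilon_p)\in K_{\mathfrak p}\cong\Q_p^{(3)}$; a short determinant computation, using $\operatorname{Tr}_{\Q_p^{(3)}|\Q_p}(\ell)=\log_p(N_{K|\Q}(\varepsilon_p))=0$, gives $R_p(K)=\pm 3\,e_2(\ell)$, where $e_2(\ell)=\ell\tau(\ell)+\tau(\ell)\tau^2(\ell)+\tau^2(\ell)\ell$. Since $\varepsilon_p$ is not a root of unity, $\ell\neq 0$, and $\ell\notin\Q_p$ (otherwise $\varepsilon_p/\sigma(\varepsilon_p)$ would lie in $\ker\log_p=\mu_K$, forcing $\varepsilon_p$ itself to be a root of unity), so $\ell$ generates $\Q_p^{(3)}$. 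Writing $\ell=p^m\ell_1$ with $m=v_p(\ell)\geq 1$ and $\ell_1\in(\Z_p^{(3)})^\times$ of trace zero, the minimal polynomial of $\ell_1$ is $X^3+e_2(\ell_1)X-e_3(\ell_1)$ with $e_3(\ell_1)\in\Z_p^\times$; were $e_2(\ell_1)$ not a unit, this would reduce modulo $p$ to $X^3-\overline{e_3(\ell_1)}$, which is separable and, because every residue class is a cube when $p\equiv 2\bmod 3$, has a simple root in $\F_p$, lifting by Hensel to a root of the (irreducible) minimal polynomial of $\ell_1$, a contradiction. Hence $v_p(e_2(\ell_1))=0$, so $v_p(e_2(\ell))=2m$ and $v_p(R_p(K))=2m\in 2\Z$. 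The remaining steps are the routine substitutions indicated above, and this completes the argument.
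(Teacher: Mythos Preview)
Your proof is correct. The paper itself gives no proof of this theorem at all: it is stated immediately after the words ``We can now combine these results'' and is left as an evident consequence of the preceding lemmas together with Conjectures~\ref{conj1} and~\ref{conj:inert}. Your write-up makes the two implicit steps explicit, and one of them is worth highlighting.

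The first is the gluing of $\mathcal K_p^{\mathrm{split}}$ and $\mathcal K_p^{\mathrm{inert}}$ into $\mathcal K_p^{\mathrm{un}}$. This is routine: since the two sub-limits agree (both being $\pr(X_f=i-2)$), any convex combination converges to the common value, and your one-line argument is exactly what is needed.

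The second is more substantial. The clause ``$v_p(R_p(K))\in 2\Z$ for all $K\in\mathcal K$'' when $p\equiv 2\bmod 3$ is a \emph{pointwise} assertion, and it does not follow from Conjectures~\ref{conj1} and~\ref{conj:inert}, which are purely distributional. In the split case one can extract it from the paper's own computations: $\det(A)=3p^2(a^2+ab+b^2)$, and after the $\GL_2(\Z_p)$-equivalence to $X^2+3Y^2$ the argument in the proof of the $p\equiv 2\bmod 3$ lemma (showing that any solution of $x^2+3y^2\equiv 0\bmod p^i$ must have $p^{\lceil i/2\rceil}\mid x,y$) forces $v_p$ to be even. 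In the inert case, however, the paper has only the distributional Conjecture~\ref{conj:inert}, so pointwise evenness is genuinely unjustified there. Your argument via the minimal polynomial $X^3+e_2(\ell_1)X-e_3(\ell_1)$, using that cubing is a bijection on $\F_p$ when $p\equiv 2\bmod 3$ to force $e_2(\ell_1)\in\Z_p^\times$, is a clean and self-contained way to close this gap; it is an addition to, not merely an elaboration of, what the paper provides.
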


\section{Results and heuristics}

We compute the $p$-adic valuation of $p$-adic regulators for all cyclic cubic extensions of $\Q$ with discriminant up to $10^{16}$ for $p<100$. The distribution table of $p$-adic valuation of $p$-adic regulators for four values of $p$ ($5$, $7$, $11$ and $13$) are provided here and compared with Conjecture~\ref{conj}, see Tables~\ref{tab:5}, ~\ref{tab:7},~\ref{tab:11} and~\ref{tab:13}. As $p$ increases, both the expected and observed number of fields with higher valuations drop significantly, reducing their effectiveness to provide heuristics for our conjecture. The distribution tables for these remaining values of $p$ are available in the appendix.

\subsection{$p = 5$ and $p= 11$}
The results are in Table~\ref{tab:5} and Table~\ref{tab:11} respectively. As $5 \equiv 2 \bmod 3$ and $11 \equiv 2 \bmod 3$, all fields are unramified. The distribution match closely with case 1 of Conjecture~\ref{conj}.

\begin{table}[!ht]
\small
\centering
 \caption{Distribution of $5$-adic valuation of $5$-adic regulators}
 \label{tab:5}
 \begin{tabular}{lrrrrrrrrrr}
    $D$ & $\#\mathcal K_5(D)$ & $2$ & $3$ & $4$ & $5$ & $6$ & $7$ & $8$ & $9$ & $10$ \\
    \hline
    \rule{0pt}{10pt}
    $10^8$      & 1\,592
                & .95979
                & 0
                & .03831
                & 0
                & .00125
                & 0
                & 0
                & 0
                & .628E-3
                \\
    $10^{10}$   & 15\,851
                & .96221
                & 0
                & .03646
                & 0
                & .00119
                & 0
                & .630E-4
                & 0
                & .630E-4
                \\
    $10^{12}$   & 158\,542
                & .96146
                & 0
                & .03691
                & 0
                & .00153
                & 0
                & .693E-4
                & 0
                & .126E-4
                \\
    $10^{14}$   & 1\,585\,249
                & .96026
                & 0
                & .03816
                & 0
                & .00151
                & 0
                & .599E-4
                & 0
                & .126E-5
                \\
    $10^{16}$   & 15\,852\,618
                & .96006
                & 0
                & .03834
                & 0
                & .00152
                & 0
                & .606E-4
                & 0
                & .164E-5
                \\
    \hline
   \multicolumn{2}{c}{Conjecture~\ref{conj}}
                & .96000
                & 0
                & .03840
                & 0
                & .00153
                & 0
                & .614E-4
                & 0
                & .245E-5
     \end{tabular}
\end{table}

\begin{table}[ht]
\small
\centering
 \caption{Distribution of $11$-adic valuation of $11$-adic regulators}
 \label{tab:11}
 \begin{tabular}{lrrrrrrrrr}
    $D$ & $\#\mathcal K_{11}(D)$ & $2$ & $3$ & $4$ & $5$ & $6$ & $7$ & $8$ \\
    \hline
    \rule{0pt}{10pt}
    $10^8$      & 1\,592
                & .989949
                & 0
                & .010050
                & 0
                & 0
                & 0
                & 0
                \\
    $10^{10}$   & 15\,851
                & .992871
                & 0
                & .007128
                & 0
                & 0
                & 0
                & 0
                \\
    $10^{12}$   & 158\,542
                & .992361
                & 0
                & .007594
                & 0
                & .441E-4
                & 0
                & 0
                \\
    $10^{14}$   & 1\,585\,249
                & .991884
                & 0
                & .008045
                & 0
                & .693E-4
                & 0
                & .630E-6
                \\
    $10^{16}$   & 15\,852\,618
                & .991780
                & 0
                & .008154
                & 0
                & .643E-4
                & 0
                & .504E-6
                \\
    \hline
   \multicolumn{2}{c}{Conjecture~\ref{conj}}
                & .991735
                & 0
                & .008196
                & 0
                & .677E-4
                & 0
                & .559E-6
    \end{tabular}
\end{table}

\subsection{$p = 7$ and $p = 13$}
The results are in Table~\ref{tab:7} and Table~\ref{tab:13} respectively. Since $7 \equiv 1 \bmod 3$ and $13 \equiv 1 \bmod 3$, some of the fields in question are ramified, so the overall distribution is difficult to predict. However, once we separate the ramified and unramified extensions, we see again that the distributions match Conjecture~\ref{conj} for both parts.

\begin{table}[ht]
\small
\centering
\caption{Distribution of $7$-adic valuation of $7$-adic regulators}
\label{tab:7}
 \begin{tabular}{lrrrrrrrrr}
    $D$ & $\#\mathcal K_7^{\mathrm{ram}}(D) $ & $1$ & $2$  & $3$ & $4$ & $5$ & $6$ & $7$ & $8$ \\
    \hline
    \rule{0pt}{10pt}
    $10^8$      & 357
                & .7591
                & .1652
                & .0560
                & .0140
                & .0056
                & 0
                & 0
                & 0
                \\
    $10^{10}$   & 3\,513
                & .7514
                & .2012
                & .0384
                & .0071
                & .0014
                & .28E-3
                & 0
                & 0 
                \\
    $10^{12}$   & 35\,237
                & .7418
                & .2060
                & .0406
                & .0089
                & .0020
                & .31E-3
                & .28E-4
                & .28E-4
                \\
    $10^{14}$   & 352\,231
                & .7354
                & .2094
                & .0440
                & .0086
                & .0015
                & .30E-3
                & .45E-4
                & .11E-4
                \\
    $10^{16}$   & 3\,522\,753
                & .7352
                & .2095
                & .0447
                & .0085
                & .0015
                & .28E-3
                & .42E-4
                & .06E-4
                \\
    \hline
   \multicolumn{2}{c}{Conjecture~\ref{conj}}
                & .7346
                & .2099
                & .0449
                & .0085
                & .0015
                & .26E-3
                & .43E-4
                & .07E-4
     \end{tabular}

 \begin{tabular}{lrrrrrrrrr}
    $D$ & $\#\mathcal K_7^\mathrm{un}(D)$ & $2$  & $3$ & $4$ & $5$ & $6$ & $7$ & $8$ & $9$ \\
    \hline
    \rule{0pt}{10pt}
    $10^8$      & 1\,235
                & .7165
                & .2275
                & .0485
                & .0056
                & .0016
                & 0
                & 0
                & 0
                \\
    $10^{10}$   & 12\,338
                & .7323
                & .2153
                & .0428
                & .0076
                & .0015
                & .16E-3
                & 0
                & 0 
                \\
    $10^{12}$   & 123\,305
                & .7363
                & .2093
                & .0439
                & .0082
                & .0018
                & .26E-3
                & .40E-4
                & .16E-4
                \\
    $10^{14}$   & 1\,233\,018
                & .7349
                & .2098
                & .0448
                & .0084
                & .0015
                & .28E-3
                & .42E-4
                & .08E-4
                \\
    $10^{16}$   & 12\,329\,865
                & .7347
                & .2099
                & .0449
                & .0085
                & .0015
                & .26E-3
                & .43E-4
                & .08E-4
                \\
    \hline
   \multicolumn{2}{c}{Conjecture~\ref{conj}}
                & .7346
                & .2099
                & .0449
                & .0085
                & .0015
                & .26E-3
                & .43E-4
                & .07E-4
     \end{tabular}
   \end{table}

\begin{table}[ht]
\small
\centering
\caption{Distribution of $13$-adic valuation of $13$-adic regulators}
 \begin{tabular}{lrrrrrrrr}
 \label{tab:13}
    $D$ & $\#\mathcal K_{13}^\mathrm{ram}(D)$ & $1$ & $2$ & $3$ & $4$ & $5$ & $6$ & $7$  \\
    \hline
    \rule{0pt}{10pt}
    $10^8$      & 221
                & .8778
                & .1085
                & .0045
                & .0045
                & .45E-2
                & 0
                & 0 
                \\
    $10^{10}$   & 2\,117
                & .8611
                & .1232
                & .0122
                & .0014
                & .14E-2
                & 0
                & 0
                \\
    $10^{12}$   & 21\,137
                & .8542
                & .1289
                & .0143
                & .0019
                & .37E-3
                & 0
                & 0
                \\
    $10^{14}$   & 211\,369
                & .8512
                & .1315
                & .0152
                & .0017
                & .22E-3
                & .09E-4
                & 0
                \\
    $10^{16}$   & 2\,113\,583
                & .8519
                & .1311
                & .0150
                & .0015
                & .15E-3
                & .12E-4
                & .14E-5
                \\
    \hline
   \multicolumn{2}{c}{Conjecture~\ref{conj}}
                & .8520
                & .1310
                & .0151
                & .0015
                & .14E-3
                & .13E-4
                & .12E-5
    \end{tabular}
    
    \vspace{1em}
    
     \begin{tabular}{lrrrrrrrrr}
    $D$ & $\#\mathcal K_{13}^\mathrm{un}(D)$ & $2$ & $3$ & $4$ & $5$ & $6$ & $7$ & $8$ & $9$ \\
    \hline
    \rule{0pt}{10pt}
    $10^8$      & 1\,371
                & .8460
                & .1356
                & .0145
                & .0036
                & 0
                & 0
                & 0
                & 0 
                \\
    $10^{10}$   & 13\,734
                & .8516
                & .1311
                & .0158
                & .0013
                & .07E-3
                & 0
                & 0 
                & 0
                \\
    $10^{12}$   & 137\,405
                & .8525
                & .1303
                & .0153
                & .0015
                & .18E-3
                & 0
                & 0
                & 0
                \\
    $10^{14}$   & 1\,373\,880
                & .8521
                & .1307
                & .0154
                & .0015
                & .15E-3
                & .14E-4
                & .14E-5
                & .72E-6
                \\
    $10^{16}$   & 13\,739\,035
                & .8520
                & .1310
                & .0151
                & .0015
                & .15E-3
                & .13E-4
                & .09E-5
                & .21E-6
                \\
    \hline
   \multicolumn{2}{c}{Conjecture~\ref{conj}}
                & .8520
                & .1310
                & .0151
                & .0015
                & .14E-3
                & .13E-4
                & .12E-5
                & .10E-6
    \end{tabular}
\end{table}

\section{Acknowledgements}
The authors would like to thank Claus Fieker for many helpful discussions and remarks. 

\clearpage

\section*{\refname}
\bibliographystyle{elsart-num-sort}
\bibliography{C3_bib}

\newpage
\appendix
\section{Tables for primes $17 \leq p \leq 100$}

\begin{table}[!ht]
\small
\centering
\caption{Distribution of $p$-adic valuation of $p$-adic regulators in case $p$ is unramified}
 \begin{tabular}{lrrrrrrrr}
    $p$ & $\#\mathcal K_{p}^\mathrm{un}(10^{16})$ & $2$ & $3$ & $4$ & $5$ & $6$ & $7$ & $8$   \\
    \hline
                $17$
                & 15\,852\,618
                & .99654
                & 0
                & .00344
                & 0
                & .113E-4
                & 0
                & .630E-7
                \\
    
    \multicolumn{2}{c}{Conjecture~\ref{conj}}
                & .99653
                & 0
                & .00344
                & 0
                & .119E-4
                & 0
                & .412E-7
                \\
    \hline
                $19$
                & 14\,342\,965
                & .89755
                & .09443
                & .00744
                & .524E-3
                & .358E-4
                & .202E-5
                & .139E-6
                \\
    \multicolumn{2}{c}{Conjecture~\ref{conj}}
                & .89750
                & .09447
                & .00745
                & .523E-3
                & .344E-4
                & .217E-5
                & .133E-6
                \\
    \hline
                $23$
                & 15\,852\,618
                & .99810
                & 0
                & .00188
                & 0
                & .422E-5
                & 0
                & 0
                \\
    \multicolumn{2}{c}{Conjecture~\ref{conj}}
                & .99810
                & 0
                & .00188
                & 0
                & .356E-5
                & 0
                & .674E-8
                \\
    \hline
                $29$
                & 15\,852\,618
                & .99881
                & 0
                & .00118
                & 0
                & .113E-5
                & 0
                & 0
                \\
    \multicolumn{2}{c}{Conjecture~\ref{conj}}
                & .99881
                & 0 
                & .00118
                & 0
                & .141E-5
                & 0
                & .167E-8
                \\
    \hline
                $31$
                & 14\,891\,921
                & .93654
                & .06038
                & .00293
                & .126E-3
                & .530E-5
                & .671E-7
                & 0
                \\
    \multicolumn{2}{c}{Conjecture~\ref{conj}}
                & .93652
                & .06042
                & .00292
                & .125E-3
                & .507E-5
                & .196E-6
                & .738E-8
                \\
    \hline
                $37$
                & 15\,039\,731
                & .94658
                & .05126
                & .00207
                & .742E-4
                & .265E-5
                & .132E-6
                & 0
                \\
    \multicolumn{2}{c}{Conjecture~\ref{conj}}
                & .94667
                & .05117
                & .00207
                & .747E-4
                & .252E-5
                & .819E-7
                & .258E-8
                \\
    \hline
                $41$
                & 15\,852\,618
                & .99940
                & 0
                & .597E-3
                & 0
                & .189E-6
                & 0
                & 0
                \\
    \multicolumn{2}{c}{Conjecture~\ref{conj}}
                & .99940
                & 0
                & .594E-3
                & 0
                & .353E-6
                & 0
                & .210E-9
                \\
    \hline
                $43$
                & 15\,148\,101
                & .95407
                & .04432
                & .00154
                & .477E-4
                & .184E-5
                & .660E-7
                & 0 
                \\
    \multicolumn{2}{c}{Conjecture~\ref{conj}}
                & .95402
                & .04437
                & .00154
                & .479E-4
                & .139E-5
                & .389E-7
                & .105E-8
                \\
    \hline
                $47$
                & 15\,852\,618
                & .99954
                & 0
                & .456E-3
                & 0
                & .126E-6
                & 0 
                & 0
                \\
    \multicolumn{2}{c}{Conjecture~\ref{conj}}
                & .99954
                & 0
                & .452E-3
                & 0 
                & .204E-6
                & 0
                & .927E-10
                \\
    \hline
                $53$
                & 15\,852\,618
                & .99965
                & 0
                & .349E-3
                & 0
                & .630E-7
                & 0
                & 0
                \\
    \multicolumn{2}{c}{Conjecture~\ref{conj}}
                & .99964
                & 0
                & .355E-3
                & 0
                & .126E-6
                & 0
                & .451E-10
                \\
    \hline
                $59$
                & 15\,852\,618
                & .99971
                & 0
                & .287E-3
                & 0
                & .630E-7
                & 0
                & 0
                \\
    \multicolumn{2}{c}{Conjecture~\ref{conj}}
                & .99971
                & 0
                & .287E-3
                & 0
                & .825E-7
                & 0
                & .237E-10
               \\
    \hline
                $61$
                & 15\,349\,425
                & .96752
                & .03167
                & .790E-3
                & .172E-4
                & .456E-6
                & 0
                & 0
                \\
    \multicolumn{2}{c}{Conjecture~\ref{conj}}
                & .96748
                & .03172
                & .780E-3
                & .170E-4
                & .349E-6
                & .687E-8
                & .131E-9
                \\
    \hline
                $67$
                & 15\,393\,169
                & .96752
                & .03167
                & .790E-3
                & .172E-4
                & .456E-6
                & 0
                & 0
                \\
    \multicolumn{2}{c}{Conjecture~\ref{conj}}
                & .97027
                & .02906
                & .648E-3
                & .131E-4
                & .129E-6
                & 0
                & 0
                \\
    \hline
                $71$
                & 15\,852\,618
                & .99980
                & 0
                & .194E-3
                & 0
                & .126E-6
                & 0
                & 0
                \\
    \multicolumn{2}{c}{Conjecture~\ref{conj}}
                & .99980
                & 0
                & .198E-3
                & 0
                & .393E-7
                & 0
                & .780E-11
                \\
    \hline
                $73$ 
                & 15\,429\,905
                & .97276
                & .02667
                & .544E-3
                & .952E-5
                & .129E-6
                & 0
                & 0 
                \\
    \multicolumn{2}{c}{Conjecture~\ref{conj}}
                & .97279
                & .02665
                & .547E-3
                & .100E-4
                & .171E-6
                & .281E-8
                & .449E-10
                \\
    \hline
                $79$
                & 15\,461\,291
                & .97487
                & .02465
                & .465E-3
                & .711E-4
                & .646E-7
                & 0
                & 0 
                \\
    \multicolumn{2}{c}{Conjecture~\ref{conj}}
                & .97484
                & .02467
                & .468E-3
                & .790E-5
                & .125E-6
                & .190E-8
                & .280E-10
                \\
    \hline
                $83$
                & 15\,852\,618
                & .99985
                & 0
                & .145E-3
                & 0
                & 0     
                & 0
                & 0
                \\
    \multicolumn{2}{c}{Conjecture~\ref{conj}}
                & .99985
                & 0
                & .145E-3
                & 0
                & .210E-7
                & 0
                & .305E-11
                \\
    \hline
                $89$
                & 15\,852\,618
                & .99986
                & 0
                & .130E-3
                & 0
                & 0     
                & 0
                & 0
                \\
    \multicolumn{2}{c}{Conjecture~\ref{conj}}
                & .99987
                & 0
                & .126E-3
                & 0
                & .159E-7
                & 0
                & .201E-11
                \\
    \hline
                $97$
                & 15\,532\,411
                & .97948
                & .02020
                & .313E-3
                & .405E-5
                & .128E-6
                & 0
                & 0
                \\
    \multicolumn{2}{c}{Conjecture~\ref{conj}}
                & .97948
                & .02019
                & .312E-3
                & .429E-5
                & .553E-7
                & .684E-9
                & .823E-11
    \end{tabular}
\end{table}

\begin{table}[!ht]
\small
\centering
\caption{Distribution of $p$-adic valuation of $p$-adic regulators in case $p$ is ramified}
 \begin{tabular}{lrrrrrrrr}
    $p$ & $\#\mathcal K_{p}^\mathrm{ram}(10^{16})$ & $1$ & $2$ & $3$ & $4$ & $5$ & $6$ & $7$  \\
    \hline
                $19$
                & 1\,509\,653
                & .89770
                & .09423
                & .00745
                & .555E-3
                & .384E-4
                & .397E-5
                & .662E-6
                \\
   \multicolumn{2}{c}{Conjecture~\ref{conj}}
                & .89750
                & .09447
                & .00745
                & .523E-3
                & .344E-4
                & .217E-5
                & .133E-6
                \\
    \hline
                $31$
                & 960\,697
                & .93663
                & .06027
                & .00295
                & .123E-3
                & .624E-5
                & .208E-5
                & 0
                \\
    \multicolumn{2}{c}{Conjecture~\ref{conj}}
                & .93652
                & .06042
                & .00292
                & .125E-3
                & .507E-5
                & .196E-6
                & .738E-8
                \\
    \hline
                $37$
                & 812\,887
                & .94678
                & .05099
                & .00213
                & .873E-4
                & .246E-5
                & 0
                & 0
                \\
    \multicolumn{2}{c}{Conjecture~\ref{conj}}
                & .94667
                & .05117
                & .00207
                & .747E-4
                & .252E-5
                & .819E-7
                & .258E-8
                \\
    \hline
                $43$
                & 704\,517
                & .95354
                & .04489
                & .00151
                & .525E-4
                & 0
                & 0
                & 0
                \\
    \multicolumn{2}{c}{Conjecture~\ref{conj}}
                & .95402
                & .04437
                & .00154
                & .479E-4
                & .139E-5
                & .389E-7
                & .105E-8
                \\   
    \hline
                $61$
                & 503\,193
                & .96740
                & .03176
                & .810E-3
                & .218E-4
                & .198E-5
                & 0
                & 0
                \\
    \multicolumn{2}{c}{Conjecture~\ref{conj}}
                & .96748
                & .03172
                & .780E-3
                & .170E-4
                & .349E-6
                & .687E-8
                & .131E-9
                \\
    \hline
                $67$
                & 459\,449
                & .97061
                & .02875
                & .607E-3
                & .152E-4
                & 0
                & 0
                & 0
                \\
    \multicolumn{2}{c}{Conjecture~\ref{conj}}
                & .97037
                & .02896
                & .648E-3
                & .129E-4
                & .240E-6
                & .431E-8
                & .750E-10
                \\
    \hline
                $73$
                & 422\,713
                & .97262
                & .02679
                & .570E-3
                & .118E-4
                & .236E-5
                & 0
                & 0
                \\
    \multicolumn{2}{c}{Conjecture~\ref{conj}}
                & .97279
                & .02665
                & .547E-3
                & .100E-4
                & .171E-6
                & .281E-8
                & .449E-10
                \\
    \hline
                $79$ 
                & 391\,327
                & .97438
                & .02512
                & .475E-3
                & .102E-4
                & 0
                & 0 
                & 0
                \\
    \multicolumn{2}{c}{Conjecture~\ref{conj}}
                & .97484
                & .02467
                & .468E-3
                & .790E-5
                & .125E-6
                & .190E-8
                & .280E-10
                \\
    \hline
                $97$
                & 320\,207
                & .97911
                & .02048
                & .390E-3
                & .624E-5
                & 0
                & 0
                & 0
                \\
    \multicolumn{2}{c}{Conjecture~\ref{conj}}
                & .97948
                & .02019
                & .312E-3
                & .429E-5
                & .553E-7
                & .684E-9
                & .823E-11
    \end{tabular}
\end{table}

\end{document}